\newcounter{my_enumerate_counter}
\DeclareMathOperator{\cof}{cof}
\newcommand{\cP}{\mathcal P}
\newtheorem{theorem}{Theorem}
\newtheorem{thm}{Theorem}[section]
\newtheorem{prop}[thm]{Proposition}
\newtheorem{lemma}[thm]{Lemma}
\newtheorem{claim}[thm]{Claim}
\theoremstyle{definition} 
\newtheorem{example}[thm]{Example}
\newtheorem{definition}[thm]{Definition}
\newcommand{\bbC}{{\mathbb C}}
\newcommand{\bbN}{{\mathbb N}}
\newcommand{\cA}{{\mathcal A}}
\DeclareMathOperator{\Sp}{sp}
\newcommand{\cstar}{$\mathrm{C}^*$}
\newcommand{\cst}{\mathrm{C}^*}
\newcommand{\bfP}{\mathsf P}
\DeclareMathOperator{\supp}{supp}
\newcommand{\bfS}{\mathsf S}
\newcommand{\bfY}{\mathsf Y}
\newcommand{\fA}{\mathfrak A}
\newcommand{\rs}{\restriction}
\newcommand{\e}{\varepsilon}
\newcommand{\APS}{A_{+,1}} % the positive part of the unit sphere. 
\newcommand{\sfZ}{\mathsf Z} 
\newcommand{\sfD}{\mathsf D}
\newcommand{\cPoH}{\cP_{\aleph_1}(H_\theta)}
\newcommand{\cPo}[1]{\cP_{\aleph_1}(#1)}
\newcommand{\cPotH}{\cP_{\aleph_2}(H_\theta)}
\newcommand{\MPC}{M^\perp\cap C}
\newcommand{\MPA}{(\bfP(A)\cap M)^\perp}
\newcommand{\McX}{\overline {M\cap X}}
\newcommand{\McXS}{\overline {M\cap X(S)}}
\newcommand{\NcX}{\overline {N\cap X}}
\DeclareMathOperator{\ASA}{A_{sa}}
\DeclareMathOperator{\COLL}{Coll}
\DeclareMathOperator{\COK}{\COLL(\aleph_1,<\kappa)}
\DeclareMathOperator{\COKK}{\COLL(\aleph_1,<j(\kappa))}
\newcommand{\sfK}{\mathsf K}
\title{Corson reflections}
\author{Ilijas Farah}
\address{Department of Mathematics and Statistics, York University, 4700 Keele Street, North York, Ontario, Canada, M3J 1P3} 
\email{ifarah@mathstat.yorku.ca}
\urladdr{http://www.math.yorku.ca/~ifarah}
\author{Menachem Magidor}
\address{The Hebrew University of Jerusalem\\
Einstein Institute of Mathematics\\
Edmond J. Safra Campus, Givat Ram\\
Jerusalem 91904, Israel}
\email{mensara@savion.huji.ac.il}
\date{\today}
\begin{document}

	\begin{abstract} 
		A reflection principle for Corson compacta holds in the forcing extension obtained by Levy-collapsing a supercompact cardinal to~$\aleph_2$. In this model, a compact Hausdorff space is Corson if and only if 
		all of its continuous images of weight~$\aleph_1$ are Corson compact. 
		We use the Gelfand--Naimark duality, and 
		our results are stated in terms of unital abelian \cstar-algebras. 
	\end{abstract}

\maketitle

Before starting, we should thank Alan Dow for pointing our attention to \cite{bandlow1991characterization}. 
Our use of \cstar-algebras is closely related to Bandlow's 
use of large Hilbert cubes. 
Similar methods have been used in  \cite{MR1270189}, \cite{MR1000971},  
\cite{eisworth2006elementary}, \cite{bandlow1994function}, \cite{dow1992set}, \cite{dow1995more}, 
\cite{kunen2003compact}, \cite{MR2352742},  and it is possible that 
the \cstar-algebraic vantage point may yield additional applications. 
A paper of Kunen 
(\cite{kunen2003compact}) contains a closely related analysis of 
Corson compact spaces.  Some of the ideas of this note are contained in their papers. 
Since we were not aware of these results, the present paper should be considered as a survey rather than a research article.

A compact Hausdorff space~$X$ is a \emph{Corson compactum} (or shortly,  Corson)
if it is homeomorphic to a subspace of some Tychonoff cube $[0,1]^\kappa$
which has the property that for every $\xi<\kappa$ 
the set $\{x\in X: x(\xi)\neq 0\}$ is countable. 

Every metrizable compactum is homeomorphic to a subspace of $[0,1]^\omega$ and therefore Corson. 
In \cite{magidor2017properties} it was proved that if there exists a non-reflecting stationary subset of cofinality $\omega$ ordinals in $\omega_2$, then 
there exists a compact Hausdorff space $X$ all of whose continuous images of weight $\aleph_1$ are Corson (and even uniform Eberlein; see \S\ref{S.Concluding}), but $X$ is not Corson. 

\begin{theorem} \label{T.1} 
Suppose $\kappa$ is a supercompact cardinal.  Then the following reflection statement holds in $V^{\COK}$: 
 If $X$ is a compact Hausdorff space, then  all continuous images of $X$ of 
 weight at most $\aleph_1$ are Corson compact if and only if $X$ is Corson. 
 The same principle follows from Martin's Maximum. 
  \end{theorem}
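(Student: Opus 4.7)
The plan is to apply Gelfand--Naimark duality, replacing the compact Hausdorff space $X$ by the unital abelian \cstar-algebra $A = C(X)$, and then to characterize Corsonness of $\Sp(A)$ by an elementary submodel reflection property. Under this correspondence, continuous surjections $X \twoheadrightarrow Y$ match unital isometric inclusions $C(Y) \hookrightarrow A$, with the weight of $Y$ equal to the density character of $C(Y)$. So the hypothesis becomes: every unital \cstar-subalgebra $B \subseteq A$ of density character at most $\aleph_1$ has $\Sp(B)$ Corson, and by a standard Lowenheim--Skolem argument every such $B$ is of the form $\overline{N\cap A}$ for some $N \in \cPotH$ with $A \in N$. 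The converse implication of the theorem is the classical fact that continuous images of Corson compacta are Corson.

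For the main direction, I would first establish a \cstar-algebraic analogue of Bandlow's characterization \cite{bandlow1991characterization}: $X = \Sp(A)$ is Corson iff there is a club of $M \in \cPoH$ with $A \in M$ such that the dual of the inclusion $\overline{M\cap A} \hookrightarrow A$ realizes $X$ as a Corson tower over the metrizable $\Sp(\overline{M \cap A})$ (made precise by an explicit retraction/section condition relating $X$ and $\Sp(\overline{M\cap A})$). Call this the \emph{countable Corson approximation property}. The analogous property at size $\aleph_1$ makes sense for $N \in \cPotH$; applied inside each Corson space $\Sp(\overline{N \cap A})$ supplied by the hypothesis, the countable version already in hand yields a club of $N \in \cPotH$ satisfying the $\aleph_1$-sized approximation property.

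The reflection step then promotes this $\aleph_1$-sized approximation to a countable one. In $V^{\COK}$, this is achieved using a generic elementary embedding $j \colon V[G] \to \fN[G\ast H]$ with $\crit(j) = \kappa = \aleph_2^{V[G]}$ obtained by lifting a supercompact embedding through the Levy collapse: by elementarity the hypothesis transfers to $j(A)$ in $\fN[G \ast H]$, and since $j"A$ is a unital subalgebra of $j(A)$ of density character less than $\aleph_2^{\fN[G \ast H]}$, the hypothesis applied there furnishes the countable approximation property for $A$ back inside $V[G]$. For the MM version, the same descent is carried out by invoking the MM-style stationary reflection from $\cPotH$ down to $\cPoH$ applied to the stationary set of $M$ that would witness a failure of the countable approximation property; such a reflected set would contradict the $\aleph_1$-sized approximation already derived.

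The main obstacle is setting up the Bandlow-style characterization in a form that simultaneously (a) implies Corsonness, (b) is local to countable submodels of $H_\theta$, and (c) is sufficiently absolute to be transferred through generic elementary embeddings or MM reflection. Once this characterization is in hand, the supercompact and MM arguments run in parallel and are reasonably standard applications of reflection. The \cstar-algebraic framework is what makes (c) tractable, since it replaces coordinate-wise conditions inside Tychonoff cubes with intrinsic properties of $A$ that interact cleanly with the submodels $M$ and $N$.
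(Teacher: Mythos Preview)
Your plan has the right skeleton, and the paper does exactly what you anticipate: it isolates an abstract reflection principle (R) that follows both from the Levy collapse of a supercompact and from MM, and then derives the Corson reflection from (R) via a submodel property of $C(X)$. But your proposal leaves unspecified precisely the piece you flag as ``the main obstacle'', and the paper's execution of that piece differs in a couple of non-obvious ways from what you sketch.

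First, the paper's submodel property is not a retraction/section condition but the following: $M$ \emph{splits} $X$ if the restriction map $C_M = \overline{M\cap C(X)} \to C(\overline{M\cap X})$ is surjective, equivalently if $C_M$ separates the points of $\overline{M\cap X}$. The reflection step (your third paragraph) is indeed carried out as you describe for MM: assuming stationarily many countable $M$ fail to split $X$, reflect to an internally approachable $N$ of size $\aleph_1$, and use that $\overline{N\cap X}$ is Corson (hence has Corson generators, and is Fr\'echet) to reach a contradiction. This is Lemma~\ref{L.split}.

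The real gap in your outline is the direction you label (a): you assert that the countable approximation property implies Corsonness, but give no argument. The paper does \emph{not} prove this as a bare biconditional characterization. Instead it proceeds by induction on the weight $\lambda$ of $X$: it first shows $X$ is Fr\'echet (Proposition~\ref{P.Angelic}, using only that images of weight $\leq 2^{\aleph_0}$ are Fr\'echet), then uses Fr\'echet\-ness to propagate the splitting property from countable $M$ to arbitrary $M$ closed under a suitable Skolem function (Lemma~\ref{L.K}), then builds a continuous chain $M_\alpha$ of submodels of size $<\lambda$ that all split $X$ and have $\overline{M_\alpha\cap X}$ Corson by the inductive hypothesis, and finally constructs Corson generators for $C(X)$ in blocks along this chain (Lemma~\ref{L.Recursive}). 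The Fr\'echet step is essential: without it one cannot guarantee that a union of splitting models splits (see Example~\ref{Ex.MP}), and the chain argument collapses. None of this structure is visible in your proposal, and it is where the work lies.
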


\subsection*{Acknowledgments} In addition to thanking Alan Dow again, we would like to thank G. Plebanek for helpful remarks on an early draft of this paper.

\section{\cstar-algebras and elementary submodels} 

\subsection{Background on \cstar-algebras} 
We quickly review the required results on \cstar-algebras, and  
unital, abelian  \cstar-algebras in particular. For additional information see e.g., 
\cite{Fa:STCstar},  \cite{Murphy:C*}
or  \cite{Black:Operator}. 

A  \emph{\cstar-algebra} is a complex Banach algebra with involution which is isomorphic to a norm-closed, self-adjoint, algebra of bounded linear operators on a complex Hilbert space. 
If $C$ is a \cstar-algebra and $Z\subseteq C$ then 
\[
\cst(Z)
\]
 denotes the \cstar-subalgebra of~$C$ generated by $Z$. 
 We write $\cst(a,Z)$ for $\cst(\{a\}\cup Z)$.

\subsubsection{Positivity} \label{S.Pos}
An element $a$ of a \cstar-algebra $A$ is \emph{self-adjoint} if $a=a^*$ and  \emph{positive} if it is self-adjoint and its spectrum is included in $[0,\infty)$. 
A standard argument (see \cite[II.3.1.3(ii)]{Black:Operator}) shows that 
$a$ is positive if and only if 
$a=b^*b$ for some $b\in A$. 
It is common to write $a\geq 0$ for `$a$ is positive'. 
For a \cstar-algebra~$A$ we write 
\begin{align*}
\ASA&=\{a\in A: a=a^*\},\\
A_+&=\{a\in A: a\geq 0\}, \\
\APS&=\{a\in A: 0\leq a\leq 1, \|a\|=1\}.
\end{align*}
On  $\ASA$ one defines  partial ordering by 
letting $a\leq b$ if and only if $b-a$ is positive. 

\subsubsection{States}\label{S.States} 
A continuous linear functional $\varphi$ on a \cstar-algebra 
 $C$ is \emph{positive} if $\varphi(a)\geq 0$ for every 
$a\in C_+$. A positive functional $\varphi$ is a \emph{state} if $\|\varphi\|=1$. 
If $C$ is a unital \cstar-algebra, with unit denoted  $1_C$, 
then a functional $\varphi$ is positive if and only if $\varphi(1)=\|\varphi\|$.  
The space of all states of a unital \cstar-algebra $C$ is convex and weak*-compact.

A proof of the following a well-known property of states is included for  reader's convenience (see \cite[Proposition~1.7.8]{Fa:STCstar} for a more general statement). 
 
\begin{lemma} \label{L.CS.0} Suppose that $\varphi$ is a state of a \cstar-algebra $A$ and $0\leq a$. 

If  $\varphi(a)=0$ then $\varphi(ab)=0$ for all $b$. 

If $\varphi(a)=\|a\|$ then $\varphi(ab)=\varphi(ba)=\|a\|\varphi(b)$ for all $b$. 
\end{lemma}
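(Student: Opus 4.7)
The plan is to base everything on the Cauchy--Schwarz inequality for states, $|\varphi(y^*x)|^2 \le \varphi(y^*y)\,\varphi(x^*x)$, which holds for any state of any \cstar-algebra (derive it from the positivity of $(\lambda x+y)^*(\lambda x+y)$, or invoke it as a named fact). The strategy is to prove (i) directly from Cauchy--Schwarz, then derive (ii) by applying (i) to $1 - a/\|a\|$ in the unitization.

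For (i): since $a \ge 0$, continuous functional calculus produces a positive square root $a^{1/2} \in A$. Writing $ab = (a^{1/2})^*(a^{1/2}b)$ and applying Cauchy--Schwarz with $y = a^{1/2}$, $x = a^{1/2}b$ gives
\[
|\varphi(ab)|^2 \le \varphi(a)\,\varphi(b^*ab) = 0,
\]
so $\varphi(ab)=0$. The symmetric factorization $ba = (ba^{1/2})(a^{1/2})$ yields the parallel estimate $|\varphi(ba)|^2 \le \varphi(bab^*)\,\varphi(a) = 0$; this two-sided form of (i) will be needed for~(ii).

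For (ii): after dividing $a$ by $\|a\|$ I may assume $\|a\|=1$. If $A$ is non-unital, pass to the unitization $\tilde A$ and extend $\varphi$ by $\varphi(1)=1$; this preserves the property of being a state. Since the spectrum of $a$ lies in $[0,1]$, functional calculus shows $1-a \ge 0$ in $\tilde A$, and $\varphi(1-a) = 1-\varphi(a) = 0$. Applying (i) on both sides to $1-a$ yields $\varphi((1-a)b) = \varphi(b(1-a)) = 0$ for every $b\in A$, i.e.\ $\varphi(ab)=\varphi(ba)=\varphi(b)$; undoing the normalization gives the factor of $\|a\|$.

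There is no real obstacle here: the argument is a two-line Cauchy--Schwarz calculation plus a one-line reduction. The only points a careful reader might want to check are the existence of $a^{1/2}$ and of $1-a$ (continuous functional calculus) and that extending $\varphi$ by $\varphi(1)=1$ preserves norm and positivity; all three are routine, so in a paper of this style I would simply cite \cite{Fa:STCstar} or \cite{Black:Operator} for the Cauchy--Schwarz inequality and carry out the two displayed estimates.
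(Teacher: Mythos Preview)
Your proof is correct and follows essentially the same Cauchy--Schwarz route as the paper. The only difference is organizational: you prove the $\varphi(a)=0$ case first via the factorization $ab=a^{1/2}(a^{1/2}b)$ and then reduce the $\varphi(a)=\|a\|$ case to it, whereas the paper argues the second case directly, using the inequality $(1-a)^2\le 1-a$ (for $0\le a\le 1$) in place of square roots to obtain $\varphi((1-a)^2)=0$ before invoking Cauchy--Schwarz.
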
 

\begin{proof} Fix $a\in \APS$ such that $\varphi(a)=1$. Then $0\leq 1-a\leq 1$ and therefore 
$(1-a)^2\leq 1-a$. 
Since $\varphi$ is positive, it satisfies $\varphi((1-a)^2)\leq \varphi(1-a)=0$. 
As it satisfies the Cauchy--Schwarz inequality, 
$|\varphi(d^*c)|^2\leq \varphi(c^*c)\varphi(d^*d)$ for all $c$ and $d$, 
we have 
\[
\varphi(b(1-a))\leq \varphi((1-a)^2)\varphi(b^*b)=0.
\]
By simplifying the left-hand side, this implies $\varphi(b)=\varphi(ba)$. The other equalities 
follow by a similar argument and by rescaling. 
\end{proof}

\subsubsection{Characters}\label{S.cps} 
A \emph{character} of a unital  \cstar-algebra is a unital *-ho\-mo\-mor\-phi\-sm $\varphi\colon C\to \bbC$. If $A$ is a unital \cstar-algebra, then its unit has an open neighbourhood consisting of invertible elements (\cite[Lemma~1.2.6]{Fa:STCstar}). Therefore,  every proper maximal two-sided ideal of $A$ is closed. In particular, the kernel of any character of $A$ is norm-closed.  All characters of a unital \cstar-algebras are therefore automatically continuous. 
Since every *-homomorphism is positive  and of norm at most 1, every character is  a state.

\subsubsection{The Gelfand--Naimark duality}\label{S.GN} \label{S.Spectrum}
 The category of compact Hausdorff spaces with respect to 
continuous maps as morphisms is equivalent to the category of unital, abelian \cstar-algebras with respect to 
$^*$-homomorphisms (i.e., homomorphisms which respect the adjoint operation) as morphisms. 
Given a compact Hausdorff space $X$, the \cstar-algebra associated with it is 
the $^*$-algebra of all complex continuous functions on $X$.  To a continuous map $f\colon X\to Y$
one associates the $^*$-homomorphism 
\[
C(Y)\ni a\mapsto a\circ f\in C(X). 
\] 
The inverse functor is defined as follows.
If $A$ is a unital abelian \cstar-algebra, then the space $X$ of characters 
of $A$ is compact in the weak*-topology and it separates points of $A$. 
The \emph{Gelfand transform} identifies $A$ with $C(X)$. 
%\subsubsection{The spectrum of an abelian \cstar-algebra}
By the Riesz Representation Theorem, the states on  $C(X)$ are in a bijective 
correspondence with the 
regular Radon probability 
 measures on $X$, via the correspondence $\varphi\mapsto \mu_\varphi$ where 
\[
\varphi(a)=\int_X a(x)\, d\mu_\varphi(x) . 
\]
  Every unital $*$-homomorphism 
$\Phi\colon C(X)\to C(Y)$ is of the  form $a\mapsto a\circ f$ for a continuous function $f\colon Y\to X$. 

The space of states of $C$ is denoted $\bfS(C)$. 
The \emph{pure states} are the extreme points of $\bfS(C)$. A state $\varphi$ on $C(X)$ is pure if and only if the 
associated probability measure $\mu_\varphi$ is a \emph{point-mass measure} (i.e., a measure that concentrates on a single point). In this case $\varphi$ agrees with 
the evaluation functional $a\mapsto a(x)$ for $x\in X$.  
The space of pure states of $C$, also known as the \emph{spectrum} of $C$,  is denoted $\bfP(C)$.

When $C$ is abelian, then a state is pure if and only if it is a character.  

We will also need the following well-known lemma (the weight of a topological space is the smallest cardinality of its basis, and 
the density character of a metric space is the minimal cardinality of a dense subspace).  

\begin{lemma}\label{L.weight} The weight of an infinite compact Hausdorff space $X$ is equal to the density character of $C(X)$. 
\end{lemma}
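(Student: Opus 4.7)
The proof splits into two inequalities. Throughout, note that since $X$ is infinite compact Hausdorff, $w(X)\geq\aleph_0$ and $C(X)$ is infinite dimensional, so its density character is also at least~$\aleph_0$; this lets me absorb countable overhead freely.

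\emph{Direction $d(C(X))\leq w(X)$.} Let $\kappa=w(X)$ and fix a base $\cB$ of size~$\kappa$. Since $X$ is compact Hausdorff, hence normal, I would apply Urysohn's lemma to each pair $(U,V)\in\cB\times\cB$ with $\overline U\subseteq V$ to obtain $f_{U,V}\in\APS$ for $A=C(X)$ that equals~$1$ on $\overline U$ and~$0$ off~$V$. Call this family~$F$; it has size at most~$\kappa$. Regularity ensures that~$F$ separates points of~$X$: given distinct $x,y$, take $V\in\cB$ with $x\in V$, $y\notin \overline V$, and $U\in\cB$ with $x\in U\subseteq\overline U\subseteq V$. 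Stone--Weierstrass (using that $F$ is self-adjoint since its elements are real-valued) then implies that the unital $^*$-subalgebra generated by~$F$ is norm-dense in~$C(X)$. Taking rational-coefficient $^*$-polynomials in elements of $F\cup\{1\}$ yields a norm-dense subset of cardinality~$\kappa$, establishing $d(C(X))\leq w(X)$.

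\emph{Direction $w(X)\leq d(C(X))$.} Let $\kappa=d(C(X))$ and fix a dense $D\subseteq C(X)$ of size~$\kappa$. Consider the family
\[
\cU=\{\{x\in X: p<\re f(x)<q\}: f\in D,\ p,q\in\bbQ, p<q\}\cup\{\{x:p<\im f(x)<q\}:\ldots\},
\]
of size at most~$\kappa$, together with all finite intersections of its members (still of size~$\kappa$). I claim this is a base. Given an open $U\ni x$, use Urysohn to obtain $g\in\APS$ with $g(x)=1$ and $g\equiv 0$ on $X\setminus U$. By density of~$D$, pick $f\in D$ with $\|f-g\|<1/4$. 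Then $\re f(x)>3/4$, while $|f(y)|<1/4$ for $y\notin U$, so $\re f(y)<1/4$. Choosing rationals $1/4<p<1/2$ and $q>\|f\|$, the basic set $\{y:p<\re f(y)<q\}$ lies in~$\cU$, contains~$x$, and is contained in~$U$.

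\emph{Expected obstacle.} There is no real obstacle—both directions are textbook arguments. The only points requiring mild care are (i) checking that the family~$F$ from Urysohn's lemma genuinely separates points (which uses regularity, not merely normality), so Stone--Weierstrass applies; and (ii) in the second direction, making sure the approximating function~$f$ is bounded enough that the interval $(p,q)$ can be taken with rational endpoints while still capturing the full relevant piece of~$X$. Both are handled by the slack built into the choice of~$1/4$ above.
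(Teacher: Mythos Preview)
Your argument is correct. Both directions are carried out cleanly, and the points you flag as potential issues (separation via Urysohn and regularity, and the rational slack in the second direction) are handled properly.

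The paper's own proof is organized a bit differently: rather than proving two inequalities, it reduces both quantities to a common invariant, namely the least $\kappa$ for which $X$ embeds in $[0,1]^\kappa$. On one side, this is a standard characterization of the weight of a compact Hausdorff space; on the other, Stone--Weierstrass identifies the density character of $C(X)$ with the minimal size of a point-separating family, which in turn is exactly what produces (or arises from) such an embedding. So the paper's argument and yours share the same engine---Stone--Weierstrass converting ``separates points'' into ``generates a dense subalgebra''---but the paper packages it as a single equivalence via the Tychonoff-cube embedding, whereas you build a base from a dense set and a dense set from a base by hand. Your route is slightly longer but entirely self-contained; the paper's is terser but leans on the embedding characterization of weight as known.
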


\begin{proof} 
The weight of an infinite compact Hausdorff space $X$ is equal to the minimal cardinal $\kappa$ such that $X$ is homeomorphic to a subspace 
of $[0,1]^\kappa$. The density character of $C(X)$ is equal to the cardinality of a minimal generating set for $C(X)$, which 
is by the Stone--Weierstrass theorem equal to the minimal cardinality of a subset of $C(X)$ that separates points of~$X$. 
But this is the minimal $\kappa$ such that $X$ is homeomorphic to a subspace of~$[0,1]^\kappa$. 
\end{proof} 

\subsubsection{Continuous Functional Calculus}\label{S.cfc} 
The \emph{spectrum} of an  element $a$ of a unital \cstar-algebra $C$,
 is\footnote{Since $C$ is unital, we identify the 
scalar multiples of its unit $1_C$ with the complex numbers.} 
\[
\Sp(a)=\{\lambda\in \bbC: \lambda-a \text{ is not invertible}\}. 
\]
Two nontrivial facts deserve mention. 
The spectrum of any operator is a nonempty compact subset of the field of complex numbers. 
Second,  if $A\subseteq B$ are unital \cstar-algebras with the same unit and $a\in A$, 
then $\Sp(a)$ as computed in $A$ is equal to $\Sp(a)$ as computed in $B$. 
A normal element $a$ is positive if and only if 
 its spectrum is included in $[0,\infty)$.

If $a\in C(X)$ then clearly $\Sp(a)$ is equal to the range of $a$. 
An element $a$ of a \cstar-algebra is  \emph{normal} 
if $aa^*=a^*a$. 
The \emph{continuous functional calculus} 
asserts that $C(\Sp(a))\cong \cst(a,1)$, 
via the isomorphism defined by $f\mapsto f(a)$.

\subsection{Corson compacta}

We will need the following standard properties of Corson compacta. 
\begin{enumerate}
	\item Every closed subspace of a Corson compactum is a Corson compactum. 
	\item Every Corson compactum $X$ has the following property. If $Z\subseteq X$ and $x$ is an accumulation point of $Z$, 
	then there exists a sequence~$z_n\in Z$, for $n\in \bbN$, such that $\lim_n z_n=x$. 
	A space with this property is said to be  \emph{Fr\'echet} or \emph{Fr\'echet--Urysohn}.  
	\item Every continuous image of a Corson compactum is a Corson compactum.  
\end{enumerate}
The third property is \cite[Theorem~6.2]{michael1977note}, and the first two are straightforward.

\subsection{Corson compacta and \cstar-algebras}
\label{S.Corson} 
The following, almost tautological, lemma provides  reformulation of our results in terms of \cstar-algebras.  

\begin{lemma} \label{L.Corson} 
If $X$ is a compact Hausdorff space then $X$ is Corson compact
if and only if there are a cardinal $\kappa$ and a family $a_\alpha$, $\alpha<\kappa$, in $C=C(X)$ 
such that the following conditions hold (see \S\ref{S.Pos} for the notation $\leq$). 
\begin{enumerate}
\item   \label{1.L.Corson}For every $\alpha$ we have 
$0\leq a_\alpha\leq 1$. 
\item  \label{2.L.Corson} For every pure state $\varphi$ on $C$ the set
$
\{\alpha<\kappa: \varphi(a_\alpha)\neq 0\}
$
is countable. 
\item  \label{3.L.Corson}The \cstar-algebra  $C$ is generated by $\{a_\alpha: \alpha<\kappa\}$ and $1$. 
\end{enumerate}
\end{lemma}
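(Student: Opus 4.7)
The plan is to prove both directions by going through Gelfand--Naimark duality, using the fact (recalled in \S\ref{S.GN}) that pure states on $C=C(X)$ are precisely the point evaluations $\varphi_x\colon a\mapsto a(x)$. Under this identification, condition~\eqref{2.L.Corson} reads: for every $x\in X$ the set $\{\alpha<\kappa: a_\alpha(x)\ne 0\}$ is countable. This already matches the support condition appearing in the definition of a Corson compactum, so most of the work reduces to translating between a family $(a_\alpha)$ in $C(X)$ and an embedding $X\hookrightarrow [0,1]^\kappa$ given by $x\mapsto (a_\alpha(x))_\alpha$.

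For the forward direction, I assume $X\subseteq [0,1]^\kappa$ realizes $X$ as a Corson compactum and let $a_\alpha\in C(X)$ be the restriction to $X$ of the $\alpha$-th coordinate projection. Items~\eqref{1.L.Corson} and~\eqref{2.L.Corson} are immediate, and item~\eqref{3.L.Corson} follows from the Stone--Weierstrass theorem: since the embedding is injective, the (self-adjoint) family $\{a_\alpha\}$ separates the points of~$X$, and the unital $*$-subalgebra it generates is therefore norm-dense in $C(X)$.

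For the converse, suppose the family $(a_\alpha)$ satisfies \eqref{1.L.Corson}--\eqref{3.L.Corson}. By~\eqref{1.L.Corson} each $a_\alpha$ is self-adjoint and takes values in $[0,1]$, so I can define a continuous map $\Phi\colon X\to [0,1]^\kappa$ by $\Phi(x)(\alpha)=a_\alpha(x)$. Condition~\eqref{3.L.Corson} combined with Stone--Weierstrass implies that $\{a_\alpha\}$ separates the points of~$X$, so $\Phi$ is injective; being a continuous injection from a compact space into a Hausdorff space, it is a homeomorphism onto its image. Finally, \eqref{2.L.Corson} applied to the pure state $\varphi_x$ gives that $\{\alpha: \Phi(x)(\alpha)\ne 0\}$ is countable for every $x\in X$, so $\Phi(X)$ witnesses that $X$ is Corson.

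I do not expect any genuine obstacle; the only points that need care are (a)~using Lemma~\ref{L.CS.0} implicitly through the pure-state/point correspondence, and (b)~applying Stone--Weierstrass in both directions (to pass from separation of points to density of the generated $C^*$-algebra, and back). These are the reasons the lemma is described as ``almost tautological.''
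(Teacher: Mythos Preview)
Your proof is correct and follows essentially the same approach as the paper: coordinate projections for the forward direction, the evaluation map $x\mapsto(a_\alpha(x))_\alpha$ for the converse, with Stone--Weierstrass and the identification of pure states with point evaluations doing the work in both. One small remark: the pure-state/point correspondence you use comes from \S\ref{S.GN}, not from Lemma~\ref{L.CS.0}, which is not actually needed here.
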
 

\begin{proof} Suppose  $X$ is a Corson compactum. Therefore we may identify $X$ with a subspace of   
a Hilbert cube $ [0,1]^\kappa$ such that $\{\alpha<\kappa: x(\alpha)\neq 0\}$ is countable for all $x\in X$. 
Then for $\alpha<\kappa$ 
the projection $a_\alpha$ to the $\alpha$th coordinate is a continuous function
from $X$ into $[0,1]$. These functions   
separate the points of $X$, and therefore the complex Stone--Weierstrass 
theorem implies  
 $C(X)\cong \cst(\{1\}\cup \{a_\lambda: \lambda<\kappa\})$. 
The condition \eqref{2.L.Corson}  is clearly equivalent to the assertion that for every 
$\lambda<\kappa$ the set $\{x\in X: x(\lambda)\neq 0\}$ is countable. Since the pure states of $C(X)$ are exactly the evaluation functions at the points of $X$ (see \S\ref{S.Spectrum}), 
this proves that every pure state $\varphi$ vanishes at all but countably many of $a_\lambda$.

Conversely, if 
some family $a_\alpha$, for $\alpha<\kappa$, 
in $C(X)$ 
satisfies \eqref{1.L.Corson}--\eqref{3.L.Corson}, then the function 
from $X=\cP(C(X))$ defined by 
\[
\varphi\mapsto \langle \varphi(a_\alpha): \alpha<\kappa\rangle
\]
 is a homeomorphism onto a Corson compact subspace of $[0,1]^\kappa$. 
\end{proof} 

An proof analogous to that of Lemma~\ref{L.Corson} gives the following. 

\begin{lemma} \label{L.Eberlein} 
	If $X$ is a compact Hausdorff space then $X$ is Eberlein compact
	if and only if there are a cardinal $\kappa$ and a family $a_\alpha$, $\alpha<\kappa$, in $C=C(X)$ 
	such that the following hold (see \S\ref{S.Pos} for the notation $\leq$). 
	\begin{enumerate}
		\item   \label{1.L.E}For every $\alpha$ we have 
		$0\leq a_\alpha\leq 1$. 
		\item  \label{2.L.E} 
		The set
		$
		\{\alpha<\kappa: \varphi(a_\alpha)>\e\}
		$
		is finite for every pure state $\varphi$ on $C$  and every $\e>0$. 
		\item  \label{3.L.E}The \cstar-algebra  $C$ is generated by $\{a_\alpha: \alpha<\kappa\}$ and $1$. \qed
	\end{enumerate}
\end{lemma}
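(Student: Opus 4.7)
The argument is essentially identical to the proof of Lemma~\ref{L.Corson}, using the characterization that a compact Hausdorff space $X$ is Eberlein compact if and only if, for some cardinal $\kappa$, it is homeomorphic to a subspace of
\[
E_\kappa=\bigl\{x\in [0,1]^\kappa: \text{for every }\e>0,\ \{\alpha<\kappa: x(\alpha)>\e\}\text{ is finite}\bigr\},
\]
endowed with the product topology (this is a consequence of the Amir--Lindenstrauss theorem).

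For the forward direction, identify $X$ with a subspace of $E_\kappa$ and let $a_\alpha\in C(X)$ be the restriction of the $\alpha$th coordinate projection. Each $a_\alpha$ takes values in $[0,1]$, giving \eqref{1.L.E}. Since the pure states of $C(X)$ are precisely the evaluation functionals $\varphi_x\colon a\mapsto a(x)$ at points $x\in X$ (see \S\ref{S.Spectrum}), we have $\varphi_x(a_\alpha)=x(\alpha)$, so the condition defining $E_\kappa$ yields \eqref{2.L.E}. The functions $a_\alpha$ separate points of $X$, so the complex Stone--Weierstrass theorem gives \eqref{3.L.E}.

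For the converse, suppose $(a_\alpha)_{\alpha<\kappa}$ satisfies \eqref{1.L.E}--\eqref{3.L.E}. Identify $X$ with $\bfP(C(X))$ and consider
\[
\Phi\colon X\to [0,1]^\kappa,\qquad \Phi(\varphi)=\langle \varphi(a_\alpha): \alpha<\kappa\rangle.
\]
Each coordinate of $\Phi$ is continuous on $\bfP(C(X))$ in the weak*-topology, so $\Phi$ is continuous. By \eqref{3.L.E} and Stone--Weierstrass, the values $\varphi(a_\alpha)$ for $\alpha<\kappa$ together with $\varphi(1)=1$ determine $\varphi$, so $\Phi$ is injective, and hence a homeomorphism onto its image. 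Condition \eqref{2.L.E} says precisely that $\Phi(X)\subseteq E_\kappa$, so $X$ is Eberlein compact.

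The main (minor) point requiring care is the cited characterization of Eberlein compacta via $E_\kappa$; once this is in hand, the argument is a direct transcription of the Corson case, with ``countably many exceptions'' replaced by ``finitely many exceptions above each threshold $\e$.''
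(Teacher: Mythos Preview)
Your proof is correct and matches exactly what the paper intends: the paper does not give a separate argument for Lemma~\ref{L.Eberlein} but simply states that ``a proof analogous to that of Lemma~\ref{L.Corson} gives the following'' and places a \qed\ at the end of the statement. Your write-up carries out precisely that analogy, and you correctly flag the one additional input needed---the Amir--Lindenstrauss characterization of Eberlein compacta as compact subspaces of $c_0(\kappa)$---which is what replaces the embedding into the $\Sigma$-product used in the Corson case.
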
 

This is a good moment to introduce the following lemma, needed in the proof of  Theorem~\ref{T.1}.

\begin{lemma} \label{L.aleph1}
If $X$ is a Corson compactum of weight at most $\lambda$ then the cardinality of $X$ is not greater than $\lambda^{\aleph_0}$. In particular, if the Continuum Hypothesis holds and $\lambda\leq\aleph_1$, then $|X|\leq \aleph_1$. 
\end{lemma}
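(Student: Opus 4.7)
The plan is to combine two basic facts: a compactum of weight at most $\lambda$ has a dense set of size at most $\lambda$, and a Corson compactum is Fr\'echet--Urysohn, so every point is a sequential limit from any dense set.

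First, since the weight of any topological space bounds its density (pick a representative from each non-empty element of a basis of size $\leq \lambda$), $X$ admits a dense subset $D$ with $|D|\leq\lambda$. Alternatively, this can be read off from Lemma~\ref{L.weight}: a dense subset of $C(X)$ of size at most $\lambda$ induces, via the weak$^*$-topology on $\bfP(C(X))\cong X$, a basis of size at most $\lambda$ and hence a dense set of the same size.

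Next, by the Fr\'echet--Urysohn property of Corson compacta (property~(2) in \S\ref{S.Corson}), for every $x\in X=\overline{D}$ there exists a sequence $\bs_x\colon\bbN\to D$ with $\lim_n \bs_x(n)=x$. Since $X$ is Hausdorff, limits are unique, so the assignment $x\mapsto \bs_x$ is an injection from $X$ into $D^{\bbN}$. Therefore
\[
|X|\leq |D|^{\aleph_0}\leq \lambda^{\aleph_0}.
\]

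For the ``in particular'', the Continuum Hypothesis gives $\aleph_1^{\aleph_0}\leq (2^{\aleph_0})^{\aleph_0}=2^{\aleph_0}=\aleph_1$, so $|X|\leq\aleph_1$ whenever the weight of $X$ is at most $\aleph_1$. There is no real obstacle here; the only substantive input is the Fr\'echet--Urysohn property of Corson compacta, which has been recorded above.
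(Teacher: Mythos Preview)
Your argument is correct, and it is genuinely different from the paper's. The paper works directly with a Corson embedding: it first shows that $X$ can be realized inside $[0,1]^\lambda$ (rather than some larger cube) with countable supports, then decomposes $X$ as $\bigcup_{A\in [\lambda]^{\aleph_0}} X_A$ where $X_A=\{x:\supp(x)\subseteq A\}$, and counts $|X|\leq \lambda^{\aleph_0}\cdot 2^{\aleph_0}=\lambda^{\aleph_0}$.

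Your route is more elementary and in fact more general: you use only that $d(X)\leq w(X)$ together with the Fr\'echet--Urysohn property, yielding $|X|\leq d(X)^{\aleph_0}$ for any Hausdorff Fr\'echet space. Thus your argument applies to every compact Hausdorff Fr\'echet space of weight at most $\lambda$, not just Corson compacta. The paper's approach, on the other hand, makes the $\Sigma$-product structure explicit, which is in keeping with how the surrounding sections handle Corson compacta via their coordinate representations.
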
 

\begin{proof} Let $\kappa$ be a cardinal such that $X$ is homeomorphic to 
a subspace of $[0,1]^\kappa$ such that 
\[
\supp(x)=\{\alpha: x(\alpha)\neq 0\}
\]
 is countable for all $x\in X$. We claim that there exists 
$S\subseteq \kappa$ with  $|S|\leq\lambda$ such that the evaluation functions at the points of $S$ 
separate the points of $X$. To prove this, fix a pair of basic open sets $U$ and $V$ of $X$ such that $\overline V\subseteq U$. By compactness, there is a finite list of Tychonoff basic open subsets of $[0,1]^\kappa$, $W_i=W_i(U,V)$, for $i<m(U,V)$, such that $\overline V\subseteq \bigcup_i W_i$ and $\bigcup_i W_i\cap X\subseteq U$. There are $\lambda$ such pairs $U,V$ and each $W_i$ depends on a finite set of coordinates in~$\kappa$. Let $S$ be the set of all these coordinates. Then its cardinality is not greater than  $\lambda$ and it separates the points of $X$. The projection to $[0,1]^S$ is a continuous injection on $X$. By compactness (and Hausdorffness) of $X$, it is a homeomorphism. 
Therefore, $X$ is homeomorphic to a subset of $[0,1]^\lambda$ as in the definition of Corson compacta.  
For a countable $A\subseteq \lambda$ let $X_A=\{x\in X: \supp(x)\subseteq A\}$ (with $\supp(x)$ re-evaluated as a subset of~$\lambda$).  
Then $|X_A|\leq |[0,1]|^{|A|}=2^{\aleph_0}$. 
Since $X$ is Corson, we have $X=\bigcup_A X_A$ and therefore $|X|\leq \lambda^{\aleph_0}\cdot  2^{\aleph_0}=\lambda^{\aleph_0}$, as required. 
The second claim follows immediately. 
\end{proof}

\section{Elementary submodels}

Throughout this section $C$ is a unital \cstar-algebra. It is assumed to be abelian and equal to $C(X)$ for some 
compact Hausdorff space $X$  unless 
otherwise specified. By \S\ref{S.Spectrum}, we can identify  $X$ with the space $\bfP(C)$ of all pure states of $C$. 
 A large enough regular cardinal $\theta$ is fixed. 
The set~$H_\theta$ of all sets of hereditary cardinality strictly less than $\theta$ is a model of a  fragment 
of ZFC sufficiently large for many practical applications (see e.g.,~\cite{Ku:Set}). 
Although elementary submodels are have been an important tool in general topology 
for years (see \cite{dow1988introduction}), this is to the best of our knowledge the first time that they are applied to study of 
compact Hausdorff spaces via the Gelfand--Naimark duality. 
 
\begin{definition} \label{Def.MC} 
Suppose that $C$ is a \cstar-algebra (unital and not necessarily abelian) 
and $M$ is an elementary submodel (not necessarily countable) of a large enough $H_\theta$ such that $C\in M$. We write 
\begin{align*}
C_M&=\cst(M\cap C),\\
\bfP(C)^M& = \bfP(C)\cap M.
\end{align*}   
For $a\in C$ and $\bfY\subseteq \bfS(C)$ write 
\[
\|a\|_{\bfY}=\sup_{\varphi\in \bfY}|\varphi(a)|
\]
and let 
$\|a\|_M=\|a\|_{M\cap \bfP(C)}$. 

By a minor abuse of notation, we denote the   \emph{annihilator} of $\bfP(C)\cap M$ in~$C$ by    $\MPC$, so that 
\[
\MPC =\{a\in C: \|a\|_M=0\}. 
\]
\end{definition}

The following is a consequence of the definitions. 

\begin{lemma} If $M$ and $C=C(X)$ are as in  Definition~\ref{Def.MC} 
then we have $\MPC=\{a\in C: a(x)=0$ for all $x\in X\cap M\}$. \qed
\end{lemma}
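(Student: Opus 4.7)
The plan is to use the Gelfand--Naimark identification of pure states of $C(X)$ with point-evaluation functionals, together with elementarity of $M$ to transfer membership between $X$ and $\bfP(C)$.

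First, I recall from \S\ref{S.Spectrum} that every pure state of $C=C(X)$ is of the form $\varphi_x\colon a\mapsto a(x)$ for a unique $x\in X$, and that the correspondence $x\mapsto \varphi_x$ is a bijection between $X$ and $\bfP(C)$ that is definable from the parameter $C$. Since $C\in M$ and $M$ is elementary in $H_\theta$, elementarity gives $\varphi_x\in M$ if and only if $x\in M$: in one direction, $\varphi_x$ is definable from $x$ and $C$, and in the other, $x$ is the unique point recovered from $\varphi_x$ by the same bijection. Hence
\[
\bfP(C)\cap M=\{\varphi_x:x\in X\cap M\}.
\]

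Second, unwinding the definitions of $\|\cdot\|_M$ and $\MPC$ from Definition~\ref{Def.MC}, an element $a\in C$ lies in $\MPC$ precisely when $\sup_{\varphi\in \bfP(C)\cap M}|\varphi(a)|=0$, i.e.\ when $\varphi(a)=0$ for every $\varphi\in\bfP(C)\cap M$. Substituting the identification above, this becomes $a(x)=\varphi_x(a)=0$ for every $x\in X\cap M$, which is exactly the desired description. There is no real obstacle here; the only point worth being careful about is the elementarity argument showing $\bfP(C)\cap M$ corresponds to $X\cap M$ rather than to some larger set of pure states indexed by points outside $M$, and this is immediate since the bijection $x\leftrightarrow \varphi_x$ is $M$-definable.
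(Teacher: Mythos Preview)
Your proof is correct. The paper itself offers no argument for this lemma: it is introduced with the phrase ``The following is a consequence of the definitions'' and carries a \qed\ with no proof, relying on the blanket identification of $X$ with $\bfP(C)$ stated at the beginning of the section. Under a literal identification $X=\bfP(C)$ the equality $X\cap M=\bfP(C)\cap M$ is tautological and the lemma unwinds directly from the definition of $\|\cdot\|_M$; your elementarity argument is exactly what is needed if one keeps $X$ and $\bfP(C)$ as distinct (but definably bijective) objects in $H_\theta$, and it is the correct and careful way to justify the claim in that reading.
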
 

\begin{lemma} If $M$, $C=C(X)$, and $Y\subseteq\bfS(C(X))$,   are as in  Definition~\ref{Def.MC} then 
 $\|\cdot\|_{\bfY}$ (and $\|\cdot\|_M$ in particular) is a seminorm majorized by 
$\|\cdot \|$ on $C$. 
\end{lemma}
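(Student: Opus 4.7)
The lemma is a routine verification from the definitions, but let me outline how I would organize it.

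The plan is to check the three seminorm axioms one by one, deriving each from the corresponding property of the functionals $\varphi \in \bfY$ and then taking a supremum. Nonnegativity of $\|a\|_{\bfY}$ is immediate since $|\varphi(a)| \geq 0$ for every $\varphi$. For absolute homogeneity, I would pull the scalar out of each $|\varphi(\lambda a)| = |\lambda|\,|\varphi(a)|$ by linearity of $\varphi$, then observe that $\sup_{\varphi \in \bfY} |\lambda|\,|\varphi(a)| = |\lambda|\,\|a\|_{\bfY}$.

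For the triangle inequality, I would fix $a,b \in C$, use linearity of each $\varphi$ together with the ordinary triangle inequality in $\bbC$ to get
\[
|\varphi(a+b)| \leq |\varphi(a)| + |\varphi(b)| \leq \|a\|_{\bfY} + \|b\|_{\bfY},
\]
and then take the supremum over $\varphi \in \bfY$ on the left. The only subtle point here is that the inequality $|\varphi(a)| \leq \|a\|_{\bfY}$ holds for every individual $\varphi \in \bfY$ by definition, so the bound on $|\varphi(a+b)|$ is uniform in $\varphi$ and passes to the supremum.

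Finally, for majorization by $\|\cdot\|$, I would invoke the fact recalled in \S\ref{S.States} that every state of a unital \cstar-algebra has norm $1$ (and more generally every continuous linear functional $\varphi$ satisfies $|\varphi(a)| \leq \|\varphi\|\,\|a\|$). Since $\bfY \subseteq \bfS(C)$, every $\varphi \in \bfY$ has $\|\varphi\| = 1$, giving $|\varphi(a)| \leq \|a\|$ for all $a \in C$; taking the supremum yields $\|a\|_{\bfY} \leq \|a\|$. I do not expect any genuine obstacle here — the only thing to watch is to state explicitly that the definition presumes $\bfY \subseteq \bfS(C)$, so that the norm bound on states is available; the special case of $\|\cdot\|_M$ then follows because $\bfP(C) \cap M \subseteq \bfS(C)$.
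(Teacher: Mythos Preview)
Your proof is correct and essentially the same as the paper's. The paper is just more terse: it observes that each $|\varphi(\cdot)|$ is a seminorm majorized by $\|\cdot\|$ (since $\|\varphi\|=1$), and then appeals to the general fact that a supremum of seminorms majorized by $\|\cdot\|$ is again a seminorm majorized by $\|\cdot\|$ --- which is exactly what your axiom-by-axiom verification unpacks.
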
 
\begin{proof}  
For every state~$\varphi$ of $C$ we have $\|\varphi\|=1$ and therefore     
 $|\varphi(\cdot)|$ is a seminorm majorized by $\|\cdot \|$. Therefore 
 $\|\cdot\|_{\bfY}$ is the supremum of a family of seminorms majorized   by $\|\cdot\|$. 
 \end{proof}

An \emph{order ideal} in a \cstar-algebra $A$ is a subset $\cA$ of $A_+$ that is a \emph{cone} (i.e., closed under the multiplication 
by positive scalars and addition) and \emph{hereditary} (i.e., if $a\in \cA$ and $0\leq b\leq a$, then $b\in \cA$).

\begin{lemma} \label{L.perp} 
	Suppose that $A$ is a (not necessarily commutative) 
	\cstar-algebra and  $M$ is an elementary submodel of a large enough $H_\theta$ such that $A\in M$. 
	Then 
	the annihilator of $\bfP(A)^M$, 
	\[
(\bfP(A)\cap M)^\perp	=\{a\in A: \|a\|_M=0\}
	\]
	is a norm-closed subspace of $A$ and its positive cone, $(\MPA )_+$, 
	is a norm-closed  order ideal in $A$.
	
	If $A$ is in addition abelian, then $\MPA$ is an ideal of $A$, and the quotient $C/(\MPA)$ is isomorphic to $C(\McX)$, 
	where $\McX$ is considered with the subspace topology. 
\end{lemma}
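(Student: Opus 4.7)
\medskip

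The plan is to derive all three conclusions directly from the fact that $\|\cdot\|_M$ is a seminorm on $A$ majorized by $\|\cdot\|$ (established in the preceding lemma), and for the last conclusion to apply Gelfand--Naimark together with the Tietze extension theorem.

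First I would handle the norm-closed subspace claim: by the preceding lemma, $\|\cdot\|_M$ is a $\|\cdot\|$-continuous seminorm on $A$, so its kernel $(\bfP(A)\cap M)^\perp$ is automatically a norm-closed linear subspace. For the order-ideal property of the positive cone, suppose $a\in (\MPA)_+$ and $0\leq b\leq a$. For every $\varphi\in \bfP(A)\cap M$, positivity of $\varphi$ together with $a\geq 0$ gives $\varphi(a)\geq 0$, hence $\varphi(a)=0$ since $\|a\|_M=0$; then $0\leq \varphi(b)\leq \varphi(a)=0$, so $\varphi(b)=0$ for all such $\varphi$ and thus $b\in (\MPA)_+$. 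Closure under addition and positive scalars is immediate from linearity of each $\varphi$, and norm-closedness follows from that of $(\MPA)$ intersected with the closed cone $A_+$.

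For the abelian case, identify $A$ with $C(X)$ where $X=\bfP(A)$, so that $\varphi\in \bfP(A)\cap M$ corresponds to the evaluation functional at some $x\in X\cap M$. Then
\[
\MPA=\{a\in C(X):a(x)=0 \text{ for all }x\in X\cap M\},
\]
and by continuity this equals $\{a\in C(X):a\restriction \McX=0\}$. Since $\McX$ is closed in $X$, this set is clearly closed under multiplication by arbitrary elements of $C(X)$, so it is a (two-sided, and since $A$ is abelian also genuine) ideal. Finally I would consider the restriction $^*$-ho\-mo\-mor\-phism $\rho\colon C(X)\to C(\McX)$, $\rho(a)=a\restriction\McX$. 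Its kernel is exactly $\MPA$, and $\rho$ is surjective by the Tietze extension theorem applied to the closed subspace $\McX\subseteq X$. The first isomorphism theorem for \cstar-algebras then yields $C/(\MPA)\cong C(\McX)$.

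The only step that is not essentially tautological is the surjectivity of the restriction map, which I invoke via Tietze; everything else reduces to manipulating positive functionals, so I expect no real obstacle. One small point worth being careful about is that $M$ need not be countable, but this plays no role: the arguments only use that $M\cap \bfP(A)$ is some subset of the state space, and that elementarity is not required for the lemma itself (it is inherited from the definition of $\|\cdot\|_M$).
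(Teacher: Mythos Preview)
Your proof is correct and takes essentially the same approach as the paper. The only differences are cosmetic: the paper phrases the first claim as the general Banach-space fact that the annihilator of any subset of the dual is a norm-closed subspace, and it asserts surjectivity of the restriction map $\pi_M\colon C(X)\to C(\McX)$ without justification---your explicit appeal to Tietze is in fact a small improvement.
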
  

\begin{proof} For the first part, 
	the annihilator of any subset of the dual space of a Banach space $Z$ is a norm-closed subspace of $Z$.  
	
	To prove the second part, note that the zero set 
	$\{a\in A_+: \varphi(a)=0\}$ of any state $\varphi$ 
	 is a norm-closed cone. 
	Hence $\MPA $ is an intersection of a family of cones, and therefore a norm-closed cone itself. 
	
	A \cstar-subalgebra of a \cstar-algebra is a left ideal if and only if its positive part is an order ideal (this is a result of 
	Effros, see \cite[Theorem~1.5.2]{pedersen2018c}).

	Now suppose $A$ is abelian.  Then every left ideal  of $A$ is an ideal of $A$. 
	It remains to prove $C/(\MPA)\cong C(\McX)$. Consider the *-homomorphism $\pi_M\colon C\to C(\McX)$ defined by 
	\[
\pi_M(a)=a\rs \McX.
\] 
	This is a surjection of $C$ onto $C(\McX)$, and its kernel is equal to $\{a\in C: a\rs\McX=0\}=\MPA$. 
\end{proof}

By Lemma~\ref{L.perp}, if $C$ is a unital  abelian \cstar-algebra and $M\prec H_\theta$ has $C$ as an element, then 
we have an exact sequence
% \[
% 0\to \MPC\to C(X)\to^{\pi_M} C(\McX)\to 0.
% \]
% 
 
 \begin{tikzpicture}
 \matrix[row sep=.3cm,column sep=.7cm]{
 	\node (0) {$0$}; 
&  	\node (Mperp) {$\MPC$};
 	& \node (C) {$C$}; 
&\node (CMX){$C(\McX)$}; 
& \node (00) {$0$}; \\
% 	& \node (CM) {$C_M$}; \\
 };
\draw (0) edge [->] (Mperp); 
 \draw (Mperp) edge [->] (C); 
 \draw (C) edge [->] node [above] {$\pi_M$} (CMX); 
% \draw (CMX) edge [->] node [below] {$\iota_M$} (C);
 \draw (CMX) edge [->] (00); 
 \end{tikzpicture}
 
What is the relation of $C_M$ to the algebras in this exact sequence? 
With $\iota_M\colon C_M\to C$  denoting the inclusion map, we have the following commutative diagram. 

\begin{tikzpicture}
\matrix[row sep=.3cm,column sep=.7cm]{
& & \node (CMX){$C(\McX)$}; \\
\node (Mperp) {$\MPC$};
& \node (C) {$C$}; \\
& & \node (CM) {$C_M$}; \\
};
\draw (Mperp) edge [->] (C); 
\draw (C) edge [->] node [above] {$\pi_M$} (CMX); 
\draw (CM) edge [->] node [below] {$\iota_M$} (C);
\draw (CM) edge [->] node [right] {$\pi_M\circ \iota_M$}(CMX); 
\end{tikzpicture}

A model $M$ that satisfies any of the equivalent conditions in Lemma~\ref{L.bad}  is said to \emph{split} $X$. 
In \S\ref{S.Splits} we will discuss this notion in some depth (not needed in the proof of Theorem~\ref{T.1}). 

\begin{lemma}\label{L.bad} 
	Suppose $X\in H_\theta$ is a compact Hausdorff space, $M\prec H_\theta$, and $X\in M$. 
	Then the following are equivalent %(we write $C=C(X)$, $C_M=\overline{M\cap C}$, $M^\perp\cap C$). 
	\begin{enumerate}
		\item\label{1.L.bad}  $\cst(C_M, \MPC)=C$. 
%		\item \label{3.L.bad} $\cst(M\cap C,(\overline{M\cap X})^\perp\cap C)=C$.	
		\item \label{2.L.bad} If $x$ and $y$ are in $\McX$ and satisfy $a(x)=a(y)$ for all $a\in M\cap C$, then $x=y$. 
		\item \label{4.L.bad} $\pi_M\circ \iota_M$ is a surjection of $C_M$ onto $C(\McX)$, and the exact 
		sequence $0\to \MPC\to C(X)\to C(\McX)\to 0$ splits. 
	\end{enumerate}
\end{lemma}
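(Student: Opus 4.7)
The plan is to establish the chain (1) $\Leftrightarrow$ (2), (1) $\Rightarrow$ (3), and (3) $\Rightarrow$ (1), with the crux being the injectivity of $\pi_M \circ \iota_M$.

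First I would prove (1) $\Leftrightarrow$ (2) by combining a small algebraic observation with the Stone--Weierstrass theorem. Since $\MPC = \ker \pi_M$ is a closed ideal of $C$ and $C_M$ is a C*-subalgebra, the sum $C_M + \MPC$ is a closed *-subalgebra equal to $\pi_M^{-1}(\pi_M(C_M))$, hence equal to $\cst(C_M, \MPC)$. So (1) is equivalent to $\pi_M(C_M) = C(\McX)$. The image $\pi_M(C_M) \subseteq C(\McX)$ is a closed, unital, self-adjoint subalgebra, so by Stone--Weierstrass it equals $C(\McX)$ precisely when it separates points of $\McX$, which is exactly (2).

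The key step, driving the remaining implications, is the norm identity $\|a\|_M = \|a\|$ for every $a \in C_M$. Because $\bbQ(i) \subseteq M$ (its elements being definable), $M \cap C$ is closed under sums, products, adjoints, and scalar multiplication by $\bbQ(i)$, from which one deduces that $M \cap C$ is norm-dense in $C_M$. For $a \in M \cap C$, applying elementarity of $M$ to the statement ``$\exists x \in X\, |a(x)| > r$'', for each rational $r < \|a\|$, produces a witness in $M \cap X$; hence $\|a\|_M = \|a\|$. For general $a \in C_M$, approximating by $p_n \in M \cap C$ and using $\bigl|\|p_n\|_M - \|a\|_M\bigr| \leq \|p_n - a\|$ extends the identity to all of $C_M$. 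Consequently, any $a \in C_M \cap \MPC$ satisfies $\|a\|_M = 0$, whence $a = 0$; that is, $C_M \cap \MPC = 0$ and $\pi_M \circ \iota_M$ is injective.

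With these in hand, (1) $\Rightarrow$ (3) follows: under (1), the map $\pi_M \circ \iota_M$ is both surjective and injective, hence an isomorphism $C_M \cong C(\McX)$, and setting $\sigma := \iota_M \circ (\pi_M \circ \iota_M)^{-1} : C(\McX) \to C(X)$ gives a *-homomorphism with $\pi_M \circ \sigma = \id_{C(\McX)}$, splitting the exact sequence. The implication (3) $\Rightarrow$ (1) is immediate, since surjectivity of $\pi_M \circ \iota_M$ forces $C_M + \MPC = C$. I expect the main obstacle to be the injectivity of $\pi_M \circ \iota_M$: without elementarity of $M$, the algebra $C_M$ could contain nonzero elements vanishing on $\McX$, so no canonical splitting could be built. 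The transfer of the elementarity argument from $M \cap C$ to all of $C_M$, via norm-density, is the technical heart of the proof.
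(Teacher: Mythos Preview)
Your proof is correct and uses the same key ingredients as the paper: both arguments pass through Stone--Weierstrass to identify surjectivity of $\pi_M\circ\iota_M$ with the separation condition \eqref{2.L.bad}, and both construct the splitting in \eqref{4.L.bad} by inverting $\pi_M\circ\iota_M$. Your organization is slightly different (you prove \eqref{1.L.bad}$\Leftrightarrow$\eqref{2.L.bad} directly via the observation $\cst(C_M,\MPC)=C_M+\MPC=\pi_M^{-1}(\pi_M(C_M))$, while the paper runs the cycle \eqref{4.L.bad}$\Rightarrow$\eqref{1.L.bad}$\Rightarrow$\eqref{2.L.bad}$\Rightarrow$\eqref{4.L.bad}), but the substantive difference lies in the injectivity of $\pi_M\circ\iota_M$. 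You prove this unconditionally from elementarity, via the identity $\|a\|_M=\|a\|$ on $C_M$; the paper, in its \eqref{4.L.bad}$\Rightarrow$\eqref{1.L.bad} step, writes ``Since $C_M$ separates points of $\McX$, $\pi_M\circ\iota_M$ is an injection''---but, dually, separating points of $\McX$ is equivalent to \emph{surjectivity} of $\pi_M\circ\iota_M$, not injectivity, so the paper's sentence does not actually justify the claim. Your elementarity argument is exactly what is needed here, and it has the added benefit of showing that $C_M\cap(\MPC)=\{0\}$ for \emph{every} $M\prec H_\theta$ with $X\in M$, regardless of whether $M$ splits $X$.
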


\begin{proof} %Since $(M\cap X)^\perp=(\overline{M\cap X})^\perp$, \eqref{1.L.bad} and \eqref{3.L.bad} are equivalent. 
	\eqref{4.L.bad} $\rightarrow$ \eqref{1.L.bad}: Suppose $\pi_M\circ \iota_M$ is a surjection. Let us first show that this 
	implies that the exact sequence in \eqref{4.L.bad} splits. 
	Since $C_M$  separates points of $\McX$, $\pi_M\circ \iota_M$ is an injection and therefore an isomorphism between $C_M$ and $C(\McX)$. 
With $\theta\colon C(\McX)\to C_M$ denoting the inverse of $\pi_M\circ \iota_M$, we have a split exact sequence %the map $\iota_M\circ \theta$ is a left inverse of $\pi_M$. 

\begin{tikzpicture}
\matrix[row sep=.3cm,column sep=.7cm]{
	\node (0) {$0$}; 
	&  	\node (Mperp) {$\MPC$};
	& \node (C) {$C$}; 
	&
	& \node (CMX) {\qquad$C(\McX)$}; 
	& \node (00) {$0$}; \\
%	& \node (CM) {$C_M$}; \\
};
\draw (0) edge [->] (Mperp); 
\draw (Mperp) edge [->] (C); 
\draw (C) edge [bend left, ->] node [above] {$\pi_M$} (CMX); 
 \draw (CMX) edge [bend left, ->] node [below] {$\iota_M\circ\theta$} (C);
\draw (CMX) edge [->] (00); 
\end{tikzpicture}

In order to prove \eqref{1.L.bad}, fix $a\in C$ and let $a_1=(\iota_M\circ \theta)(a)$.  Then $a_1\in C_M$, 
$a_0=a-a_1$ belongs 
to $\MPC$, and therefore $a=a_0+a_1$ belongs to $C(C_M,\MPC)$
	
	\eqref{1.L.bad} $\rightarrow$ \eqref{2.L.bad}: Suppose that \eqref{2.L.bad} fails and fix distinct $x$ and $y$ in $\McX$ 
	such that $a(x)=a(y)$ for all $a\in M\cap C$. 
	Since $x\neq y$, there exists $b\in C$ such that $b(x)\neq b(y)$. But every $c\in (\McX)^\perp$ satisfies $c(x)=0=c(y)$, 
	and therefore $b\notin \cst(M\cap C,(\McX)^\perp)$, showing that \eqref{1.L.bad} fails.

\eqref{2.L.bad} $\rightarrow$ \eqref{4.L.bad} The assumption asserts that the elements of $C_M$ separate points of $\McX$. 
Therefore $(\pi_M\circ \iota_M)(C_M)$ is a norm-closed, self-adjoint, subalgebra of $C(\McX)$ that separates points and contains all constant functions. 
By the complex Stone--Weierstrass theorem (e.g., \cite[Theorem~4.3.4]{Pede:Analysis}), it is equal to $C(\McX)$. 
\end{proof}

The following proposition ought to be well-known. 

\begin{prop} \label{P.Angelic} Suppose that $X$ is a compact Hausdorff space such that every continuous image of $X$ of weight at most $2^{\aleph_0}$ is
	Fr\'echet. Then $X$ is Fr\'echet. 
\end{prop}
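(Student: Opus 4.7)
The plan is a proof by contradiction via elementary submodels. Suppose $X$ is compact Hausdorff and not Fr\'echet: fix $Z\subseteq X$ and $x\in\overline{Z}\setminus Z$ such that no sequence in $Z$ converges to $x$. Take a large regular cardinal $\theta$ and an elementary submodel $M\prec H_\theta$ with $|M|=2^{\aleph_0}$, $M^{\aleph_0}\subseteq M$ (so $M$ is closed under countable sequences), and $\{X,Z,x\}\subseteq M$. Set $C=C(X)$ and $C_M=\cst(M\cap C)$, and let $\pi\colon X\to Y:=\bfP(C_M)$ be the canonical continuous surjection. By Lemma~\ref{L.weight}, $w(Y)$ equals the density character of $C_M$, which is at most $|M\cap C|\leq 2^{\aleph_0}$. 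Hence $Y$ is Fr\'echet by hypothesis, and the goal is to derive a contradiction by exhibiting $(\pi(Z\cap M),\pi(x))$ as a witness to the failure of the Fr\'echet property in $Y$.

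The first step is to show that $M\cap C$ separates $x$ from every other point of $M$: for $y\in M$ with $y\neq x$, compact Hausdorffness yields some $a\in C$ with $a(x)\neq a(y)$, and since $x,y\in M$ elementarity provides such an $a$ inside $M\cap C$, whence $\pi(y)\neq \pi(x)$. As $x\notin Z$, this gives $\pi(x)\notin \pi(Z\cap M)$. To see $\pi(x)\in\overline{\pi(Z\cap M)}$, let $V$ be any neighborhood of $\pi(x)$ in $Y$. Since the $*$-algebra generated by $M\cap C$ is norm-dense in $C_M$ and (taking $\mathbb{Q}(i)$-coefficients) its elements lie in $M$, one may shrink $V$ to a basic neighborhood $V'$ of $\pi(x)$ defined by finitely many functions in $M\cap C$ with rational tolerances. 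The statement ``there exists $z\in Z$ with $\pi(z)\in V'$'' is then first-order with parameters in $M$; it is true in $H_\theta$ because $\pi^{-1}(V')$ is an open neighborhood of $x\in\overline{Z}$, so elementarity produces $z\in Z\cap M$ with $\pi(z)\in V'\subseteq V$. Thus $\pi(x)$ is an accumulation point of $\pi(Z\cap M)$.

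Fr\'echetness of $Y$, applied to $\pi(Z\cap M)$ and the accumulation point $\pi(x)\notin \pi(Z\cap M)$, now yields a sequence $(w_n)_n$ in $Z\cap M$ with $\pi(w_n)\to \pi(x)$. The closure condition $M^{\aleph_0}\subseteq M$ forces $(w_n)$ itself to lie in $M$. By hypothesis $(w_n)\not\to x$ in $X$, and this failure is equivalent to the first-order statement ``$\exists a\in C(X)$ with $0\leq a\leq 1$, $a(x)=0$, and $\limsup_n a(w_n)>0$'' (use Urysohn to convert a separating open neighborhood of $x$ into such a function). Elementarity yields such $a\in M\cap C\subseteq C_M$; but then $\pi(w_n)\to\pi(x)$ forces $a(w_n)\to a(x)=0$, contradicting $\limsup_n a(w_n)>0$. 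The only technical point I expect to need care is the approximation step in the middle paragraph, where one must replace arbitrary generators of $C_M$ by $M$-visible ones before invoking elementarity; once this is set up, both the separation argument and the sequential-convergence contradiction are straightforward applications of elementarity combined with the closure $M^{\aleph_0}\subseteq M$.
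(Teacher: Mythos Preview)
Your proof is correct and follows essentially the same approach as the paper: take $M\prec H_\theta$ of size $2^{\aleph_0}$ closed under $\omega$-sequences containing $X,Z,x$; show by elementarity that $x$ is an accumulation point of $Z\cap M$ in the weak*-topology of $C_M$; use Fr\'echetness of the continuous image $\bfP(C_M)$ to extract a sequence $(w_n)$ in $Z\cap M$; invoke $M^{\aleph_0}\subseteq M$ to place the sequence in $M$; and then apply elementarity to conclude convergence in $X$. The only differences are cosmetic: you frame it as a proof by contradiction while the paper argues directly, and your separation step showing $\pi(x)\notin\pi(Z\cap M)$ is unnecessary (Fr\'echetness applies to any point in the closure, not just accumulation points outside the set), though harmless.
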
 

\begin{proof} Fix $Z\subseteq X$ and $x\in \overline Z$. In order to find a sequence in $Z$ that converges to $x$, 
	fix a large enough regular cardinal $\theta$ and $M\prec H_\theta$ that contains $X,Z$, and $x$ and such that $M^\omega\subseteq M$ and $|M|=2^{\aleph_0}$. Then the pure state space of~$C_M$ is, being a continuous image of $X$ of weight at most $2^{\aleph_0}$, Fr\'echet. 
	We can identify all $z\in Z\cap M$ and $x$ with pure states of $C_M$. 
	
	\begin{claim} In the weak*-topology induced by $C_M$, $x$ is an accumulation point of $Z\cap M$. 
	\end{claim} 

\begin{proof} 
	In the weak*-topology induced by $C$, $x$ is an accumulation point of~$Z$. 	
	This means that for all $n\geq 1$ and all $a_j\in C$, for $j<n$, the $n$-tuple 
	$(a_j(x): j<n)$ is an accumulation point of $\{(a_j(z): j<n): z\in Z\}$. 
	Since $M\prec H_\theta$, the following holds for all $n$:   
	
	For all $a_j\in C(X)\cap M$, for $j<n$,  
	the $n$-tuple $(a_j(x): j<n)$ is an accumulation point of $\{(a_j(z): j<n): z\in Z\cap M\}$. 

	Since $C(X)\cap M$ is dense in $C_M$, $x$ is an accumulation point of $Z\cap M$ in the weak*-topology induced by $C_M$. 
\end{proof} 

	Let $z_n$, for $n\in \bbN$, be  a sequence in $Z\cap M$ that converges to $x$ in the weak*-topology of $C_M$. 
	This sequence belongs to $M$, since $M^\omega\subseteq M$. 
	By elementarity, $\lim_n a(z_n)= a(x)$ for all $a\in C$.  This implies $\lim_n z_n=x$ in the topology of $X$. 

Since $Z$ and $x$ were arbitrary, this proves that $X$ is Fr\'echet. 
\end{proof} 

The following result appears as \cite[Exercise~2.4G]{engelking1989general} and we include a proof for reader's convenience. 

\begin{lemma} \label{L.Angelic} A continuous image of a compact, Hausdorff, and Fr\'echet space is Fr\'echet. 
	\end{lemma}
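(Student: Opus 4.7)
The plan is to exploit the fact that a continuous map from a compact space to a Hausdorff space is automatically a closed map, and then pull back an accumulation point in the image to an accumulation point in the domain.

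Let $f \colon X \to Y$ be a continuous surjection (we may replace $Y$ by $f(X)$, which is itself a continuous image of $X$), where $X$ is compact, Hausdorff, and Fr\'echet. Fix $A \subseteq Y$ and $y \in \overline A$; we need to produce a sequence $a_n \in A$ converging to~$y$. Set $B = f^{-1}(A)$, so that $f(B) = A$ by surjectivity. Because $X$ is compact and $Y$ is Hausdorff, $f$ is a closed map: the image of any closed subset of $X$ is compact and hence closed in $Y$. In particular, $f(\overline B)$ is a closed subset of $Y$ that contains $f(B) = A$, so it contains $\overline A$, and therefore there exists $x \in \overline B$ with $f(x) = y$.

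Now I would invoke the Fr\'echet property of $X$ to choose a sequence $b_n \in B$ with $b_n \to x$. By continuity, $f(b_n) \to f(x) = y$, and $f(b_n) \in f(B) = A$ for every $n$. Since $A$ and $y$ were arbitrary, $Y$ is Fr\'echet.

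I do not expect any real obstacle: the entire argument rests on the single observation that a continuous surjection from a compact space onto a Hausdorff space is closed, which converts the existence of an accumulation point in the codomain into the existence of one in the domain. Compactness and the Hausdorff assumption on the image are both used only for this closedness; Fr\'echetness is then applied directly in $X$.
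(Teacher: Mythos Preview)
Your proof is correct and follows essentially the same route as the paper: both arguments find a point of the fiber $f^{-1}(y)$ lying in the closure of $f^{-1}(A)$ and then push a convergent sequence forward. Your version is in fact cleaner---you invoke directly that $f$ is a closed map (compact domain, Hausdorff codomain) to get $y\in f(\overline{B})$, whereas the paper reaches the same conclusion via a short covering argument; note that both proofs tacitly use that the image space is Hausdorff, which is implicit in the paper's compact-Hausdorff setting.
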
 

\begin{proof} Suppose $X$ is Fr\'echet and $f\colon X\to Y$ is a surjection. Fix $Z\subseteq Y$ and an accumulation point $x$ of $Z$. 
	Let $Z'=f^{-1}(\{x\})$ and let $T=f^{-1}(Z)$.  We claim that $\overline {Z'}\cap T\neq \emptyset$. 
	Assume otherwise, and for every $z\in T$ fix an open neighbourhood $u_z$ disjoint from $Z'$. 
Then $U=\bigcup_{z\in T} u_z$  is an open cover of~$T$ disjoint from $Z'$, and $f[X\setminus U]$ is a compact subset of $Y$ containing $Z$ that $x$ does not belong to; 
contradiction. 

Therefore $\overline{Z'}\cap T\neq \emptyset$. Since $X$ is Fr\'echet, there exists a sequence $(z_n')$ in $Z'$ such that $x'=\lim_n z_n'$ belongs to $T$. 
Then $f(z_n')\in Z$ for all $n$ and $\lim_n f(z_n')=x$. 

Since $Z$ and $x$ were arbitrary, this proves that $Y$ is Fr\'echet. 
\end{proof} 
	
\section{Proof of  Theorem~\ref{T.1}} \label{S.proof.T.1}

Some $N\prec H_\theta$ of cardinality $\aleph_1$ is said to be \emph{internally approachable} if 
it is equal to the union of an increasing $\omega_1$-chain of countable 
elementary submodels each of whose proper initial segments belongs to the next model in the sequence.    
Consider the following reflection principle. 

\begin{enumerate}
	\item [(R)] If $\theta$ is an uncountable regular cardinal and 
	$S\subseteq \cPoH$ is stationary, then there exists an internally approachable  $N\prec H_\theta$ of cardinality~$\aleph_1$
	such that $S\cap \cPo N$ is stationary in $\cPo N$. 
\end{enumerate}
This principle was introduced and proved to follow from MM in \cite[Theorem~13]{FoMaShe:Martin}. 
Requiring $N$  to be an elementary submodel of $H_\theta$ is not a loss of generality, 
since if every stationary subset of $\cPoH$ reflects to $\cPo Z$ for some $Z\in \cPotH$, then 
every stationary subset of $\cPoH$ reflects to a stationary set of $Z\in \cPotH$.

Proposition~\ref{P.Ref} is standard, but we could not find it in the literature. A proof is included for reader's convenience.

\begin{prop} \label{P.Ref} If $\kappa$ is a supercompact cardinal then $\COK$ forces the following strengthening of (R).
\begin{enumerate}
	\item [] If $\theta$ is an uncountable regular cardinal and 
	$S\subseteq \cPoH$ is stationary, then there exists $N\prec H_\theta$ of cardinality $\aleph_1$ closed under $\omega$-sequences	such that $S\cap \cPo N$ is stationary in $\cPo N$. 
\end{enumerate}
\end{prop}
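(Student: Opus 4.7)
The plan is to lift a supercompactness embedding through the Levy collapse and reflect the stationary set inside the image model. Working in $V[G]$ with $G$ generic for $\COK$, fix $\theta$ and a stationary $S\subseteq\cPoH$. Pick $\lambda$ in $V$ large enough that the $V$-cardinality of any nice $\COK$-name for $H_\theta^{V[G]}$ is at most $\lambda$, and use the supercompactness of $\kappa$ to obtain an elementary $j\colon V\to M$ with $\crit(j)=\kappa$, $j(\kappa)>\lambda$, and $M^\lambda\subseteq M$ in $V$. The factorization $j(\COK)=\COKK=\COK\ast\dot{\bbQ}$, where $\dot{\bbQ}$ names $\COLL(\aleph_1,[\kappa,j(\kappa)))$, allows one to pick a $V[G]$-generic $H$ for the tail forcing and lift $j$ in the standard way to an elementary $\tilde{j}\colon V[G]\to M[G\ast H]$ with $\tilde{j}(G)=G\ast H$.

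I would take $N^{*}:=\tilde{j}[H_\theta^{V[G]}]$ as the witness inside $M[G\ast H]$ and verify the four required properties. Membership in $M[G\ast H]$ follows because $j\restriction H_\theta^V\in M$ by the $\lambda$-closure of $M$, so the graph $\{(x,y):\exists\dot{x}(\dot{x}^G=x\wedge j(\dot{x})^{G\ast H}=y)\}$ is a function definable in $M[G\ast H]$ from parameters available there. The cardinality of $N^{*}$ is at most $\lambda<j(\kappa)$, so it is collapsed to $\aleph_1$ in $M[G\ast H]$. Elementarity $N^{*}\prec H_{\tilde{j}(\theta)}^{M[G\ast H]}$ is immediate from $\tilde{j}$ being an isomorphism onto its image. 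For closure under $\omega$-sequences, any countable sequence $(x_n)$ in $N^{*}$ (in $M[G\ast H]$) has $x_n=\tilde{j}(y_n)$ for some $y_n\in H_\theta^{V[G]}$; the countable closure of $\bbQ$ places $(y_n)$ in $V[G]$, and $\theta>\aleph_1$ places it in $H_\theta^{V[G]}$, so $\tilde{j}((y_n))=(x_n)$ lies in $N^{*}$.

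For the stationarity of $\tilde{j}(S)\cap\cPo{N^{*}}$ in $\cPo{N^{*}}$, note first that $\crit(\tilde{j})>\aleph_0$ gives $\tilde{j}(s)=\tilde{j}[s]\subseteq N^{*}$ for each $s\in S$, so $\tilde{j}[S]\subseteq\tilde{j}(S)\cap\cPo{N^{*}}$ and it suffices to show $\tilde{j}[S]$ is stationary. Given a club $C\in M[G\ast H]$ in $\cPo{N^{*}}$, transport it through the isomorphism $\tilde{j}$ to obtain the club $C':=\{\tilde{j}^{-1}[c]:c\in C\}$ in $\cPo{H_\theta^{V[G]}}$ in $V[G\ast H]$. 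Since $\bbQ$ is countably closed and therefore preserves stationary subsets of $\cPoH$, $S$ remains stationary in $V[G\ast H]$, so $S\cap C'\neq\emptyset$; for any $s$ there, $\tilde{j}[s]\in\tilde{j}[S]\cap C$.

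Thus $M[G\ast H]$ models the reflection statement for $\tilde{j}(S)$, and by elementarity of $\tilde{j}$ the same statement holds in $V[G]$ for $S$. The main obstacle is the bookkeeping around the lifted embedding, specifically ensuring that $N^{*}$ is actually an element of $M[G\ast H]$ rather than merely of the ambient $V[G\ast H]$; this is where the $\lambda$-closure of $M$ and the countable closure of the tail forcing combine with the standard nice-name description of forcing extensions.
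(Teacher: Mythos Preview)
Your proof is correct and follows essentially the same route as the paper: lift a supercompactness embedding $j$ through the Levy collapse, take $N^*=\tilde{j}[H_\theta^{V[G]}]$ as the reflecting witness inside $M[G\ast H]$, use the $\kappa$-cc of $\COK$ together with the closure of $M$ to place $N^*$ in $M[G\ast H]$, use countable closure of the tail $\COLL(\aleph_1,[\kappa,j(\kappa)))$ for both $\omega$-closure of $N^*$ and preservation of stationarity, and descend by elementarity. One cosmetic slip: you write ``$\theta>\aleph_1$'' in the $\omega$-closure step, but $\theta=\aleph_1$ is allowed; the point you need is only that $\theta$ is regular uncountable, so a countable sequence from $H_\theta$ lies in $H_\theta$.
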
 
	
\begin{proof} Every model of cardinality $\aleph_1$ closed under $\omega$-sequences is clearly approachable. We will prove a strengthening in which the model $N$ is required to be  closed under $\omega$-sequences. 
Let $V[G]$ be a forcing extension by $\COK$. Suppose 
	$\theta$ is an uncountable regular cardinal and 
	$S\subseteq \cPoH$ is stationary. Let $j\colon V\to N$ be an elementary embedding with critical point $\kappa$ such that  $j(\kappa)>\theta$ and~$N$ is closed under $2^{<\theta}$-sequences. (We will be using the fact that $|H_\theta|=2^{<\theta}$.) 
	
	 By the standard methods (\cite[Proposition~9.1]{cummings2010iterated}), $j$ can be extended to an elementary embedding (also denoted $j$) 
	 \[
	 j\colon V[G]\to N[G_1]
	 \]
	 for an $N$-generic filter $G_1\subseteq \COKK$ such that 
	 \[
	 G_1\cap \COK=G. 
	 \]
	Let $Z=(H_\theta)^{V[G]}$.
	 Since $\theta\geq \kappa$ is regular and $\COK$ has $\kappa$-cc, $H_\theta^V$ is a $\COK$-name for $Z$. 
		As $N$ is closed under $2^{<\theta}$-sequences and $\COK$ has $\kappa$-cc, $Y=j``Z$ belongs to $N[G_1]$. 
		It is an elementary submodel of $j(Z)=H_{j(\theta)}$. Since $\COKK$ is $\aleph_1$-closed, $\cPo{Y}\subseteq Y$. 
	In $V[G]$ it holds that $S$ is a stationary subset of $\cPo Z$, and the quotient forcing $\COLL(\aleph_1,[\kappa,j(\kappa)))$ is $\aleph_1$-closed. 
	Therefore  $S$  remains a stationary subset of $\cPo Z$ in $N[G_1]$. 
	But this is equivalent to $j``S$ being stationary in $\cPo Y$.  

In $N[G_1]$ we therefore have  $|Y|=\aleph_1$, $Y^\omega\subseteq Y$, and $j(S)$ reflects to $Y$. 
By elementarity, in $V$ there exists $X\prec H_\theta$ of cardinality $\aleph_1$
	such that $S\cap \cPo Y$ is stationary in $\cPo X$ and $X^\omega\subseteq X$. 
	
	Since $\theta$ and $S$ were arbitrary, we have proved that (R) holds in $V[G]$. 
	\end{proof}	
	
\begin{lemma} \label{L.split} 
	Suppose that (R) holds and $X$ is a compact Hausdorff space 
	such that every continuous image of $X$ of weight not greater than $\aleph_1$ 
	is Corson. If $\theta$ is a regular cardinal such that $X\in H_\theta$ then the set
	\[
	\sfD=\{M\in \cPoH:  X\in M, M\text{ does not split } X\}
	\]
	is nonstationary. 
\end{lemma}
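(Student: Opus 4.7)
My plan is to argue by contradiction. Suppose $\sfD$ is stationary. Apply (R) to obtain an internally approachable $N \prec H_\theta$ of cardinality $\aleph_1$ with $X \in N$, written as $N = \bigcup_{\alpha<\omega_1} N_\alpha$ for an increasing chain of countable elementary submodels with $\langle N_\beta : \beta \leq \alpha\rangle \in N_{\alpha+1}$, such that $\sfD\cap \cPo N$ is stationary in $\cPo N$. Since $\{N_\alpha : \alpha<\omega_1\}$ is club in $\cPo N$, the set $S := \{\alpha<\omega_1 : N_\alpha \in \sfD\}$ is stationary in $\omega_1$. For each $\alpha \in S$, the failure of condition~(2) of Lemma~\ref{L.bad} for $N_\alpha$ produces distinct $x_\alpha, y_\alpha \in \overline{N_\alpha \cap X}$ with $a(x_\alpha) = a(y_\alpha)$ for all $a \in N_\alpha \cap C$; since all parameters lie in $N_{\alpha+1}$, elementarity lets me choose $x_\alpha, y_\alpha \in N_{\alpha+1} \subseteq N$, and in particular $x_\alpha, y_\alpha \in N \cap X$.

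Next I set $Y := \bfP(C_N)$ and let $q \colon X \to Y$ denote the Gelfand dual of the inclusion $C_N \hookrightarrow C$. By Lemma~\ref{L.weight}, $Y$ has weight at most $\aleph_1$, so by hypothesis $Y$ is Corson compact. The crucial observation is that elementarity of $N$ applied to the distinct pair $x_\alpha, y_\alpha \in N \cap X$ furnishes some $a \in N \cap C$ with $a(x_\alpha) \neq a(y_\alpha)$; this $a$ necessarily lies in $(N \setminus N_\alpha) \cap C$, and it certifies that $q(x_\alpha) \neq q(y_\alpha)$ in $Y$.

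Fix a Corson witness family $\{a_\xi : \xi<\omega_1\} \subseteq C_N$ for $Y$ via Lemma~\ref{L.Corson}. Each $a_\xi$ is a norm-limit of polynomials in a countable subset of $N \cap C$, so a standard closure argument yields a club $C_0 \subseteq \omega_1$ with the property that $a_\xi \in C_{N_\alpha}$ whenever $\alpha \in C_0$ and $\xi < \alpha$. For $\alpha \in S \cap C_0$ and $\xi < \alpha$, continuity together with the $N_\alpha \cap C$-indistinguishability of $x_\alpha, y_\alpha$ forces $a_\xi(x_\alpha) = a_\xi(y_\alpha)$. Consequently the least $\xi_\alpha$ at which $q(x_\alpha)$ and $q(y_\alpha)$ disagree satisfies $\xi_\alpha \geq \alpha$, and (after possibly swapping $x_\alpha \leftrightarrow y_\alpha$) $\xi_\alpha \in \supp(q(x_\alpha))$.

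The main obstacle is to turn this configuration into a contradiction with the Corson structure of $Y$. The intended route combines compactness and the Fr\'echet property: a finite-intersection argument in $Y \times Y$ applied to the tails $\{(q(x_\alpha), q(y_\alpha)) : \alpha \in S \cap C_0,\ \alpha > \xi\}$ produces a simultaneous accumulation point $(p, r)$ approached $\omega_1$-cofinally, and the threshold established above then forces $a_\xi(p) = a_\xi(r)$ for every $\xi$, so that $p = r$. Upgrading this to an outright contradiction is delicate, because Corson compacta need not satisfy the usual small-diagonal property (witness suitable $\Sigma$-product subspaces of $[0,1]^{\omega_1}$); the last step therefore calls for a finer use of elementarity of $N$ applied to definable witnesses for non-splitting, most cleanly via the $\omega$-closure strengthening of (R) from Proposition~\ref{P.Ref}, in order to reflect the diagonal accumulation down to a genuine elementarity contradiction on the stationary set $S$.
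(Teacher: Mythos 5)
Your setup is on the right track and matches the first half of the paper's argument: reflecting $\sfD$ to an internally approachable $N$, extracting a stationary $S\subseteq\omega_1$ with $N_\alpha\in\sfD$, pulling the witnesses $x_\alpha\neq y_\alpha$ into $N$ by elementarity, noting that some $a\in(N\setminus N_\alpha)\cap C$ separates them, and arranging a club $C_0$ with $a_\xi\in C_{N_\alpha}$ for $\xi<\alpha$. But the proof is not complete: your last paragraph is an acknowledged gap, and the route you sketch there (an accumulation point of the pairs $(q(x_\alpha),q(y_\alpha))$ taken over stationarily many $\alpha$, followed by a small-diagonal-style contradiction) does not go through --- as you yourself observe, Corson compacta need not have a small diagonal (the one-point compactification of a discrete set of size $\aleph_1$ already fails it), and neither (R) nor the $\omega$-closure in Proposition~\ref{P.Ref} repairs this. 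In fact no aggregation over stationarily many $\alpha$ is needed: a single $\alpha$ suffices once you use the part of the Corson structure you have not yet touched, namely the countable-support condition on the generators.

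Concretely, by a closing-off argument fix a further club $C_1$ such that for every $\alpha\in C_1$ and every $x\in N_\alpha\cap X$ the countable set $\{\xi<\omega_1: a_\xi(x)\neq 0\}$ is contained in $\alpha$ (this is the analogue of the paper's goodness condition~(2); the paper gets it by re-choosing the chain to be elementary in the structure $(N,(a_\xi)_{\xi<\omega_1})$, since that sequence need not belong to $N$, but closing off along your fixed chain $N_\alpha$ works just as well). Now take one $\alpha\in S\cap C_0\cap C_1$. Since $Y=\bfP(C_N)$ is Corson, it is Fr\'echet, so $q(x_\alpha)$ and $q(y_\alpha)$, which lie in the closure of $q[N_\alpha\cap X]$, are limits of sequences from $q[N_\alpha\cap X]$; for $\gamma\geq\alpha$ the generator $a_\gamma$ vanishes at every term of these sequences, hence $a_\gamma(x_\alpha)=a_\gamma(y_\alpha)=0$ by continuity. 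Combined with the agreement $a_\xi(x_\alpha)=a_\xi(y_\alpha)$ for $\xi<\alpha$ (which extends from $N_\alpha\cap C$ to $C_{N_\alpha}\supseteq\{a_\xi:\xi<\alpha\}$ by continuity, as you noted), all Corson generators of $C_N$ agree at the two points, and since they generate $C_N$ together with $1$ this gives $q(x_\alpha)=q(y_\alpha)$ --- contradicting the separating element $a\in N\cap C$ you produced. This Fr\'echet-plus-support step is exactly the paper's finishing move and is the idea missing from your proposal.
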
 

\begin{proof} Assume otherwise. By (R)  fix an internally approachable 
$N\prec H_\theta$ of cardinality $\aleph_1$ such that  $\sfD\cap \cPo N$ is stationary in $\cPo N$. 
	Then $\overline{N\cap X}$ is a closed subspace of $X$ of weight not greater than $\aleph_1$, and therefore a Corson compactum. 
	
	By Lemma~\ref{L.Corson}  we can 
	fix $a_\alpha$, for $\alpha<\aleph_1$, that together with $1$ generate~$C_N$ so that the set 
	\[
	Z(x)=\{\alpha<\aleph_1: a_\alpha(x)\neq 0\}
	\]
	is countable for all $x\in \NcX$. 

At this point we cannot assert that this sequence of generators belongs to~$N$ (we cannot even assert that any of the generators belongs to $N$). 
	Since $N$ is internally approachable, we can choose a continuous $\aleph_1$-sequence $M_\alpha$, for $\alpha<\aleph_1$, of countable elementary submodels of the expanded structure $(N,(a_\alpha: \alpha<\aleph_1))$ such that 
	$N=\bigcup_\alpha M_\alpha$. We say that an ordinal $\alpha<\aleph_1$ is \emph{good} if the following two conditions hold. 
	\begin{enumerate}
	\item \label{1.good} If $\beta<\alpha$ then $a_\beta\in C_{M_\alpha}$ 
	\item  \label{2.good} If $x\in M_\alpha\cap X$, then $Z(x)\subseteq \alpha$. 
%	\item \label{3.good} $M_\alpha\cap \aleph_1=\alpha$. 
	\end{enumerate}
A standard closing off argument shows that  the set of good $\alpha$ includes a club in $\aleph_1$. 
	Since $\sfD\cap N$ is stationary, there exists a good $\alpha$ such that $M_\alpha\in \sfD$. 
	As $M_\alpha$ does not split $X$, some distinct $x$ and $y$ in $\overline{M_\alpha\cap X}$ satisfy $a(x)=a(y)$ for all $a\in M_\alpha\cap C$. Since $M_\alpha\in N$, by elementarity we can choose $x$ and $y$ to be elements of $N\cap X$. Since $a(x)=a(y)$ for all $a\in C_{M_\alpha}$,  \eqref{1.good} implies that $a_\beta(x)=a_\beta(y)$ for all $\beta\in M_\alpha\cap \aleph_1$.

Since $\overline{N\cap X}$ is Corson, it is  Fr\'echet, and there are $x_n$ and $y_n$, for $n<\omega$, in $M_\alpha\cap X$ such that, in the weak*-topology of $C_N$, 
we have  $\lim_n x_n=x$ and $\lim_n y_n=y$. 
	By \eqref{2.good},  $Z(x_n)\subseteq M_\alpha$ and $Z(y_n)\subseteq M_\alpha$ for all $n$. Therefore 
	for   $\gamma\in \aleph_1\setminus M_\alpha$ we have $a_\gamma(x_n)=a_\gamma(y_n)=0$ for all $n$,  hence $a_\gamma(x)=a_\gamma(y)=0$.

	We have proved that  $a_\gamma(x)=a_\gamma(y)$ for all $\gamma<\aleph_1$. However, $x$ and $y$ belong to $N$,  $C_N$ separates points of $N\cap X$, and $C_N$ is generated by $a_\gamma$, for $\gamma<\aleph_1$; contradiction. 
\end{proof}

\begin{lemma} \label{L.K} 
	Suppose that $\theta$ is a regular cardinal, $X\in H_\theta$ is a compact, Fr\'echet, Hausdorff space, and the set 
	\[
	\sfK=\{M\in \cPoH: \text{$X\in M$ and  $M$ splits $X$}\}
	\]
includes a club. If $f\colon H_\theta^{<\omega}\to H_\theta$ is such that every $M\in \cPoH$ closed under $f$ is in $\sfK$, 
then every $N\prec H_\theta$  closed under $f$ and such that $X\in N$ splits $X$. 
\end{lemma}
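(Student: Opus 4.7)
The plan is to use Lemma~\ref{L.bad} to convert the splitting condition into the separation statement that elements of $\overline{N \cap X}$ are distinguished by $N \cap C$, and then push the problem down to a countable elementary submodel of $N$ using the Fréchet hypothesis on~$X$.

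Fix $N \prec H_\theta$ closed under $f$ with $X \in N$, and write $C = C(X)$. By Lemma~\ref{L.bad}, it suffices to show that whenever $x, y \in \overline{N \cap X}$ satisfy $a(x) = a(y)$ for all $a \in N \cap C$, then $x = y$. So fix such $x$ and $y$. Since $X$ is Fréchet, its closed subspace $\overline{N \cap X}$ is Fréchet too, so we may choose sequences $(x_n)$ and $(y_n)$ in $N \cap X$ with $x_n \to x$ and $y_n \to y$. Next, form the countable set $A = \{X\} \cup \{x_n, y_n : n \in \bbN\} \subseteq N$ and let $M$ be the closure of $A$ under $f$ together with a fixed collection of Skolem functions for $H_\theta$. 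Since $N$ is an elementary submodel closed under $f$, both closures remain inside $N$; since $A$ is countable and each operation has finite arity, $M$ is countable. Thus $M \prec H_\theta$, $M \in \cPoH$, $X \in M$, and $M$ is closed under $f$, so by hypothesis $M \in \sfK$: that is, $M$ splits $X$.

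To finish, note that $x, y \in \overline{M \cap X}$ because $x_n, y_n \in M \cap X$ for all $n$, and for every $a \in M \cap C$ the inclusion $M \subseteq N$ gives $a \in N \cap C$, whence $a(x) = a(y)$ by our assumption. Applying Lemma~\ref{L.bad} to $M$ then yields $x = y$, as required. I expect the main obstacle to be ensuring that the countable $M$ can be built as a subset of $N$: without $M \subseteq N$, elements of $M \cap C$ need not lie in $N \cap C$, so the separation hypothesis on $N$ does not transfer to $M$. This is precisely the role of requiring $N$ to be closed under $f$ (combined with its closure under Skolem functions inherited from elementarity), which keeps the Skolem hull of any countable subset of $N$ inside $N$.
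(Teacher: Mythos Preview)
Your proof is correct and follows the same approach as the paper: use the Fr\'echet property to pull $x$ and $y$ down to sequences in $N\cap X$, trap those sequences in a countable $M\subseteq N$ closed under $f$, and appeal to the splitting of $M$. The only cosmetic difference is that the paper writes the argument in the direct form (assume $x\neq y$, produce $a\in C_M\subseteq C_N$ separating them) and simply takes a countable $M\prec N$ closed under $f$, whereas you phrase it as the contrapositive and build $M$ by closing under $f$ together with Skolem functions; both constructions yield the same $M$, and your remark that elementarity of $N$ keeps the hull inside $N$ is exactly the point.
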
 

\begin{proof} 
	Fix $N\prec H_\theta$ such that $X\in N$ and $N$ is closed under $f$. 
	Suppose that $x$ and $y$ are distinct points of $\NcX$. 
	Since $X$ is Fr\'echet, there are sequences $(x_n)$ and $(y_n)$ in $N\cap X$ converging to $x$ and $y$, 
	respectively. Fix a countable $M\prec N$ closed under $f$ and such that $X$ and all $x_n$ and all $y_n$ belong to $M$. 
	Then $x$ and $y$ belong to $\McX$. Since $M$ splits $X$, there is $a\in C_M$ such that $a(x)\neq a(y)$. 
	Since $M\subseteq N$, we have $a\in N$. 
	
	Because $x$ and $y$ were arbitrary distinct points of $\NcX$, we conclude that $N$ splits $X$. 
\end{proof}

\begin{lemma} \label{L.Recursive} 
Suppose $X\in H_\theta$ is a compact, Fr\'echet,  Hausdorff space and~$M_\alpha$, for $\alpha<\lambda$, 
is a continuous chain of elementary submodels with the following properties for all $\alpha<\lambda$:
\begin{enumerate}
	\item $X\in M_0$. 
%	\item $M_{\alpha+1}^\omega\subseteq M_{\alpha+1}$. 
	\item $M_{\alpha}$ splits $X$. 
	\item $\overline{M_{\alpha}\cap X}$ is a Corson compact subspace of $X$. 
	\item $\bigcup_{\alpha<\lambda} M_\alpha\supseteq X$. 
\end{enumerate}
Then $X$ is Corson compact. 
\end{lemma}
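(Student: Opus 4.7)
By Lemma~\ref{L.Corson}, it is enough to exhibit a family $\mathcal{F}$ of positive contractions in $C=C(X)$ which, together with $1$, generates $C$ as a \cstar-algebra and which is \emph{point-countable}, meaning $\{f\in\mathcal{F}:f(x)\neq 0\}$ is countable for every $x\in X$. My plan is to build $\mathcal{F}$ by transfinite recursion along the chain, adding at each stage $\alpha<\lambda$ a new batch $N_\alpha\subseteq C_{M_\alpha}$ of positive contractions and maintaining the invariants: (i)~every $f\in N_\alpha$ vanishes on $Y_\beta$ for each $\beta<\alpha$, where $Y_\beta=\overline{M_\beta\cap X}$; (ii)~the $*$-sub\-alge\-bra generated by $\{1\}\cup\bigcup_{\beta\leq\alpha}N_\beta$ is norm-dense in $C_{M_\alpha}$; (iii)~for every $y\in Y_\alpha$ the set $\{f\in\bigcup_{\beta\leq\alpha}N_\beta:f(y)\neq 0\}$ is countable.

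At the base stage I apply Lemma~\ref{L.Corson} to the Corson compactum $Y_0$ to obtain positive contractions generating $C(Y_0)$ with the point-countable property, then lift them through the $*$-homomorphism section $\theta_0\colon C(Y_0)\to C_{M_0}$ supplied by Lemma~\ref{L.bad} (valid because $M_0$ splits $X$). For a successor stage $\alpha=\beta+1$ I apply Lemma~\ref{L.Corson} to $Y_{\beta+1}$, use invariant (ii) at $\beta$ to subtract from each new Corson generator its approximant in the existing subalgebra, take positive and negative parts of the residuals (which vanish on $Y_\beta$), and lift them through $\theta_{\beta+1}$. At a limit stage $\alpha$ I set $N_\alpha=\emptyset$; invariant (ii) then passes through because $C_{M_\alpha}=\overline{\bigcup_{\beta<\alpha}C_{M_\beta}}$ by continuity of the chain. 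It is here that the Fr\'echet hypothesis enters crucially: for a point $y\in Y_\alpha\setminus\bigcup_{\beta<\alpha}Y_\beta$, Fr\'echetness gives a sequence $(y_n)$ in the dense subset $\bigcup_{\beta<\alpha}Y_\beta$ of $Y_\alpha$ converging to $y$, and continuity of every $f\in\bigcup_{\beta}N_\beta$ forces $f(y)\neq 0$ to imply $f(y_n)\neq 0$ for cofinitely many $n$, so the support at $y$ is contained in the countable union of the inductively countable supports at the $y_n$'s.

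At the end, $\mathcal{F}=\bigcup_{\alpha<\lambda}N_\alpha$ works. Generation follows from invariant (ii) in the limit, combined with the observation that $M=\bigcup_\alpha M_\alpha$ itself splits $X$ (any two distinct points lie in some $M_\alpha$ by hypothesis (4) and are separated by $C_{M_\alpha}$) and $\overline{M\cap X}=X$, so $C_M=C(X)$ by Lemma~\ref{L.bad}. Point-countable support is then automatic: letting $\beta(x)=\min\{\alpha:x\in Y_\alpha\}$, invariant (i) forces $f(x)=0$ for every $f\in N_\gamma$ with $\gamma>\beta(x)$, while invariant (iii) at $\beta(x)$ bounds the contribution from $\bigcup_{\gamma\leq\beta(x)}N_\gamma$.

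The main obstacle I anticipate is maintaining invariant (iii) across the successor step, specifically for new points $y\in Y_{\beta+1}\setminus Y_\beta$. For such $y$ the support in the freshly added batch $N_{\beta+1}$ is controlled by the Corson witness on $Y_{\beta+1}$, but the support in the older generators $\bigcup_{\gamma\leq\beta}N_\gamma$ is not governed by invariant (iii) at $\beta$ (since $y\notin Y_\beta$). Handling this will require a second appeal to Fr\'echetness — extracting a sequence in $M_{\beta+1}\cap X$ converging to $y$ and bootstrapping through continuity — together with a careful choice of the approximants that produce the residuals, so as not to accumulate support; this interaction with the lift through $\theta_{\beta+1}$ and the functional-calculus cutoff producing positive contractions is where the technical work lies.
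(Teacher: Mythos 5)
Your overall architecture is the paper's: generators chosen in blocks along the chain, later blocks vanishing on the earlier sets $Y_\beta=\overline{M_\beta\cap X}$, empty blocks at limits, Fr\'echetness used to control supports at limit stages, and the final bookkeeping via $\beta(x)$ and your invariants (i), (iii) is sound, as is your limit-stage argument. The gap is exactly the successor step you flag yourself, and the recipe you sketch for it does not work. First, subtracting an \emph{approximant} supplied by density (your invariant (ii)) makes the residual only small on $Y_\beta$, not zero, so invariant (i) is lost; exact vanishing requires subtracting the image under the section $\theta_\beta$ of Lemma~\ref{L.bad} (this is where the hypothesis that $M_\beta$ splits $X$ is really used), i.e.\ writing $a=(a-\theta_\beta(a\rs Y_\beta))+\theta_\beta(a\rs Y_\beta)$ with the first summand in $M_\beta^\perp\cap C_{M_{\beta+1}}$. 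Second, and more seriously, even with exact subtraction the family of residuals need not be point-countable: the subtracted elements are chosen one generator at a time, with no control on how many of them are nonzero at a fixed $y\in Y_{\beta+1}$, so invariant (iii) can already fail for the new block alone; the ``second appeal to Fr\'echetness'' you propose concerns approximating $y$ by points of $M_{\beta+1}\cap X$ and does not repair this defect, which lies in the choice of the generators, not in the topology near $y$.

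The missing idea is not to modify Corson generators of $Y_{\beta+1}$ at all, but to take the new block to be a family of Corson generators (in the sense of Lemma~\ref{L.Corson}) of the unital subalgebra $D_{\beta+1}=\cst(1, M_\beta^\perp\cap C_{M_{\beta+1}})$ itself. Its pure state space is a continuous image, by restriction of characters, of the pure state space of $C_{M_{\beta+1}}$, which is homeomorphic to the Corson compactum $Y_{\beta+1}$ because $M_{\beta+1}$ splits $X$; since continuous images of Corson compacta are Corson, Lemma~\ref{L.Corson} applies to $D_{\beta+1}$ directly. Such a block vanishes on $Y_\beta$ by definition, is point-countable at \emph{every} $x\in X$ (as $x$ restricts to a character of $D_{\beta+1}$), and together with $C_{M_\beta}$ it generates $C_{M_{\beta+1}}$ by the splitting of $M_\beta$, exactly as in Lemma~\ref{L.bad}\eqref{1.L.bad}. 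Note also that for the old generators at a new point you do not need Fr\'echetness: since $M_\beta$ splits $X$, restriction identifies $C_{M_\beta}$ with $C(Y_\beta)$, so any $y\in Y_{\beta+1}$ agrees on $C_{M_\beta}$ with some $z\in Y_\beta$ and inherits the countable support of $z$ from your invariant (iii). The paper in fact dispenses with stage-by-stage support invariants altogether: it only records that each block is a Corson generating family for its own subalgebra, and proves point-countability at the end by a single contradiction argument, taking a limit point of uncountable cofinality of the set of blocks met by a bad point and using Fr\'echetness of $\cP(C_\beta)$ via Lemma~\ref{L.Angelic}. Either route works, but only after the successor block is chosen as above.
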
 

\begin{proof} We write $C=C(X)$, and we also write $C_\alpha$ in place of $C_{M_\alpha}$. 
	Let us say that a sequence $(a_\alpha)_{\alpha<\kappa}$ in a unital, abelian, \cstar-algebra $D$ 
	that satisfies \eqref{1.L.Corson}--\eqref{3.L.Corson} of Lemma~\ref{L.Corson}
	is a sequence of \emph{Corson generators} for $D$. We will choose a sequence of Corson generators for $C$ in blocks, by recursion on $\lambda$. 

	First choose a sequence $\cA_0$ of Corson generators for $C_0$. 
	For $\alpha<\lambda$, let 
	\[
	D_{\alpha+1}=\cst(1,M_\alpha^\perp\cap C_{\alpha+1}).
	\] 
	This is a unital \cstar-subalgebra of $C_{\alpha+1}$. Since the pure state space of $C_{\alpha+1}$ is Corson 
	and every continuous image of a Corson compact space is Corson, the pure state space of $D_{\alpha+1}$ is Corson. 
	We can therefore choose a sequence~$\cA_{\alpha+1}$ of Corson generators for $D_{\alpha+1}$. For a limit ordinal $\alpha$ let~$\cA_\alpha=\emptyset$. 

We claim that $\cA=\bigcup_\alpha \cA_\alpha$ is a sequence of Corson generators for $C(X)$. 	

First we prove that $\cst(\cA\cup \{1\})=C$. Towards this end, we use induction on $\alpha\leq \kappa$
to prove that $\cst(\bigcup_{\beta\leq\alpha} \cA_\beta\cup \{1\})=C_\alpha$.

This is true for $\alpha=0$. 
Suppose that the assertion is true for all $\beta<\alpha$. 
Consider the case when $\alpha$ is a successor ordinal, say $\alpha=\beta+1$. 
Since $M_\beta$ splits $X$, 
we have 
\[
\textstyle C_{\beta+1}=\cst(C_\beta, M_\beta^\perp)=\cst(\cA_{\beta+1}\cup \bigcup_{\gamma\leq \beta}\cA_\gamma)
\]
as required. 
If $\alpha$ is a limit ordinal, then  $\bigcup_{\beta<\alpha} C_\beta$ 
is dense in $C_\alpha$, and $\cA_\alpha=\bigcup_{\beta<\alpha}C_\beta$  generates~$C_\alpha$. 

Therefore $\cst(\cA)=C$, and it remains to prove that the set 
\[
Z(x)=\{a\in \cA: a(x)\neq 0\}
\]
is countable for every $x\in X$. 
Assume otherwise and fix $x$ such that $Z(x)$ is uncountable.  
Since $\cA_\alpha$ is a sequence of Corson generators for every $\alpha$, 
$Z(x)\cap \cA_\alpha$ is countable for all $\alpha$. 
Therefore the set $\{\alpha: Z(x)\cap \cA_\alpha\neq \emptyset\}$ is uncountable. 
Let $\beta$ be the least limit point of this set of cofinality $\aleph_1$. 
Let $\varphi$ be the pure state of~$C_\beta$ corresponding to $x$, i.e., $\varphi(a)=a(x)$ for $a\in C_\beta$. 
Lemma~\ref{L.Angelic} implies that $\cP(C_\beta)$ is Fr\'echet. Since $\cof(\beta)$ is uncountable, there exists $\gamma<\beta$
such that~$\varphi$ belongs to $\overline{M_\gamma\cap X}$. 
By construction,  this implies that $\cA_\delta$ is annihilated by $x$ for all $\alpha\leq \delta<\beta$; contradiction. 
\end{proof} 

%{\tt IF: Funny enough, the above proof does not seem to work for Eberlein compacta, since it seems to be quite important to know that $\beta$ as in the last paragraph has uncountable cofinality!}

A discussion on why Lemma~\ref{L.Recursive} does not apply to the space constructed in \cite{magidor2017properties} is given in 
Example~\ref{Ex.MP}.

\begin{proof}[Proof of Theorem~\ref{T.1}] By Proposition~\ref{P.Ref}, (R)
	holds in $V^{\COK}$. This model also satisfies the Continuum Hypothesis. % and therefore every cardinal $\mu$ 	of uncountable cofinality  satisfies $\mu^{\aleph_0}=\mu$. 
By induction on~$\lambda$ we will prove that
if $X$ is a compact Hausdorff space of weight $\lambda$ such that all continuous images of $X$ of weight at most $\aleph_1$ 
are Corson, then~$X$ is Corson. 

Suppose that the inductive hypothesis is true for all compact Hausdorff spaces of weight less than $\lambda$. 
Then $\lambda\geq \aleph_2$. 
Fix a compact Hausdorff space~$X$ such that all continuous images of $X$ of weight less than $\lambda$ are Corson. 
Since every Corson space is Fr\'echet and since $2^{\aleph_0}=\aleph_1<\lambda$, Proposition~\ref{P.Angelic} implies that $X$ is Fr\'echet. 

Fix a regular $\theta$  such that $X$ and $C(X)$ belong to $H_\theta$. 
By Lemma~\ref{L.split}, the set 
\[
\sfK=\{M\in \cPoH: X\in M, M\text{ splits } X\}
\]
includes a club. By Kueker's theorem (see e.g., \cite[Theorem~3.4]{foreman2010ideals}), 
there exists $f\colon H_\theta^{<\omega}\to H_\theta$ such that every $M\in \cPoH$ closed under $f$ belongs to~$\sfK$.
By Lemma~\ref{L.K}, if $M\prec H_\theta$, $X\in M$, and $M$ is closed under $f$, then $M$ splits $X$. 
Hence if in addition $|M|<\lambda$, then  $\McX$ 
is a Corson compact subspace of $X$. 

Let $M_\alpha$, for $\alpha<\lambda$, be an increasing chain of elementary submodels of~$H_\theta$ closed under $f$ such that $X\in M_0$ and $|M_\alpha|<\lambda$ for all $\alpha$. 
By the previous paragraph, these models satisfy the assumptions of Lemma~\ref{L.Recursive}. Since in addition $X$ is Fr\'echet, 
Lemma~\ref{L.Recursive} implies that $X$ is Corson. 
%
%
%Let $X$ be a space of weight $\lambda$ all of whose continuous images of weight at most $\aleph_1$ are Corson. 
%By the inductive hypothesis, all continuous images of $X$ of weight less than $\lambda$ are Corson. 
%Lemma~\ref{L.split} implies that $\sfK=\{M\in \cPoH: \text{$X\in M$ and  $M$ splits $X$}\}$ includes a club. 
%
%
%
%Consider the case when $\lambda=\mu^+$ and $\mu$ is regular, or singular of uncountable cofinality. Then $\mu^{\aleph_0}=\mu$.  
%As in the case when $\lambda=\aleph_2$, by using Lemma~\ref{L.K} we can choose a $\lambda$-sequence of models, each  of cardinality $\mu$, 
%that satisfy the requirements of Lemma~\ref{L.Recursive} and conclude that 
%$X$ is Corson. 
%
%Now consider the case when $\lambda$ is a singular cardinal. Fix uncountable regular cardinals $\mu_\xi$ such that 
% $\lambda=\sup_{\xi<\cof(\lambda)}\mu_\xi$. Let $M_\xi$, for $\xi<\cof(\lambda)$, be models such that $|M_\xi|=\mu_\xi$ 
% that satisfy the requirements of Lemma~\ref{L.Recursive}. Then this lemma implies that $X$ is Corson. 
% 
% 
%If $\lambda$ is a successor of a singular cardinal of cofinality $\omega$, it is the same story. 
\end{proof}

\section{Some remarks on splitting models} 
\label{S.Splits} 

In the concluding section we collect a few observations on the notion of splitting models not  
required in the proofs of our main results. 
In all of the following examples, $\theta$ is a cardinal large enough regular to have $X\in H_\theta$.

\begin{example} \label{Ex.betaN} 
	There exists a compact Hausdorff space $X$ such that no countable $M\prec H_\theta$
	splits $X$. Take for example $X=\beta\bbN\setminus \bbN$, the \v Cech--Stone remainder of $\bbN$. 
	We only need to know that if $Z$ is a countable discrete subset of $X$, then the closure  of $Z$ is homeomorphic to $\beta\bbN$
and therefore of cardinality $2^{2^{\aleph_0}}$.

Suppose $M\prec H_\theta$ is countable (and certainly $X=\beta\bbN\setminus \bbN\in M$ if $\theta$ is large enough). 
A counting argument shows that \eqref{4.L.bad} of Lemma~\ref{L.bad} fails.  
First, $|\McX|=2^{2^{\aleph_0}}$. Second, $C_M$ is separable and therefore $|\cP(C_M)|\leq 2^{\aleph_0}$. 
\end{example}

\begin{example} \label{Ex.duplicate} 
	There is a compact, Hausdorff,  Fr\'echet (even first countable),  and separable  space  $X$ such that no countable $M\prec H_\theta$ splits $X$. 
	Let $X$ be $[0,1]\times \{0,1\}$ with the lexicographical ordering and the order topology. 
	It is easy to check that this space is compact, Hausdorff, first countable, and separable. Suppose $M\prec H_\theta $ is countable. 
	Then $\McX=X$ since a countable dense set of $X$ belongs to (and is therefore a subset of) $M$. 
	Choose $x\in [0,1]\setminus M$. Then $(x,0)$ and $(x,1)$ are not separated by the elements of~$C_M$, 
	and therefore $M$ fails \eqref{2.L.bad} of Lemma~\ref{L.bad}. 
\end{example}

Our last example is more specific and most relevant to 	Theorem~\ref{T.1}.

\begin{example} \label{Ex.MP} 
There exists a compact Hausdorff space $X$ and an increasing sequence of elementary submodels $M_n\prec H_\theta$ such that each $M_n$ splits $X$ but $\bigcup_n M_n$ does not split $X$. 
This space is based on \cite{magidor2017properties}, and we include the relevant details for reader's convenience. 

	Suppose $\lambda$ is an ordinal, $S\subseteq \lambda$ and every $\alpha\in S$ is a limit ordinal of cofinality $\omega$. 
For each $\alpha\in S$ fix a strictly  increasing sequence $p_n(\alpha)$, for $n\in \bbN$, of ordinals  such that $\sup_n p_n(\alpha)=\alpha$. 
Let $A_\alpha=\{p_n(\alpha): n\in \bbN\}$ and 
let $\fA(S)$ be the Boolean algebra of subsets of $\sfZ=\bigcup_{\alpha\in S} A_\alpha$ generated by all finite subsets of $\sfZ$ and $\{A_\alpha: \alpha\in S\}$. 

Let $X(S)$ be the Stone space of $\fA(S)$, i.e., the space of ultrafilters of~$\fA$. It is a compact Hausdorff space. 
Each ordinal $\xi<\lambda$ corresponds to a principal ultrafilter which is an isolated point $x(\xi)$ of $X$.
Each $\alpha\in S$ corresponds to a unique nonprincipal ultrafilter $y(\alpha)$ that concentrates on~$A_\alpha$. It satisfies
 $y(\alpha)=\lim_n x(p_n(\alpha))$. 
  Finally, a unique ultrafilter $z$ in $X(S)$ does not concentrate on any countable set. 
 
If $S$ is stationary then $X$ is not Corson (this was proved for $\lambda=\omega_2$ in \cite[Lemma~3.3]{magidor2017properties}, but the proof of the general case is identical). 
On the other hand, if $|\lambda|\leq \aleph_1$ and $S\cap \gamma$ is nonstationary for all $\gamma<\lambda$, then $\fA(S)$ is uniform Eberlein compact 
(\cite[Lemma~3.5]{magidor2017properties}). 

Therefore the existence of a nonreflecting stationary set\footnote{Here $S^2_0$ denotes the set of all ordinals below $\omega_2$ of cofinality $\omega$.}  $S\subseteq S^2_0$ implies that there exists a space $X=X(S)$ all of whose continuous images of weight not greater than $\aleph_1$ are uniform 
Eberlein compacta, but $X$ is not Corson. 

Since $X(S)$ is the Stone space of the Boolean algebra $\fA(S)$, the relevant \cstar-algebra $C(X)$ has a dense subset consisting of continuous functions with finite range (i.e., `step functions')
and $C(X)$ therefore does not provide any more information than $\fA(S)$. We will therefore work with $\fA(S)$ in the following.

Now suppose $M\prec H_\theta$ is such that $\lambda$ and $S$ belong to $M$. 
Then 
\[
M\cap X(S)=\{x(\xi): \xi\in M\}\cup \{y(\alpha): \alpha\in M\}\cup \{z\}
\]
and (note that $\alpha\in M$ implies $A_\alpha\subseteq M$) 
\[
\McXS=\{x(\xi): \xi\in M\}\cup \{y(\alpha): A_\alpha\cap M\text{ is infinite}\}\cup \{z\}. 
\]
Therefore if  $M\cap \lambda$ is an ordinal and it belongs to $S$, then $y(M\cap \lambda)$ belongs to $\McXS$ but not to $M\cap X(S)$. 
In particular, every $M\prec H_\theta$ such that $M\prec \lambda$ is an ordinal of uncountable cofinality splits $X(S)$. 

Now suppose that $S\subseteq S^2_0$ is stationary. 
We can then choose an increasing sequence $M_n\prec H_\theta$ for $n\in \bbN$ such that $M_n\cap \omega_2$ is an 
ordinal of cofinality~$\omega_1$, 
but $M=\bigcup_n M_n$ satisfies $M\cap \omega_2\in S$. 
Then each $M_n$ splits $X(S)$, but $M$ does not. 
\end{example} 

\section{Concluding Remarks} 
\label{S.Concluding} 

A compact Hausdorff space~$X$ is an \emph{Eberlein compactum} (or shortly, Eberlein)
if it is homeomorphic to a subspace of some Tychonoff cube $[0,1]^\kappa$
which has the property that for every $\xi<\kappa$ 
and every $\e>0$ the set $\{x\in X: x(\xi)>\e\}$ is finite. 
Every Eberlein compactum is clearly Corson. 
We do not know whether analog of Theorem~\ref{T.1} holds for Eberlein compacta. 
It is not difficult to prove that it holds for strong Eberlein compacta, and this can 
be easily extracted from \cite[Corollary~3.3]{kunen2003compact}. 

Our original proof of (a special case of) Theorem~\ref{T.1} used a very strong reflection principle obtained 
by adapting the proof of \cite[Theorem~1, (a) $\Rightarrow$ (c)]{magidor1982reflecting}.

\bibliographystyle{amsplain}
\bibliography{corson}

\providecommand{\bysame}{\leavevmode\hbox to3em{\hrulefill}\thinspace}
\providecommand{\MR}{\relax\ifhmode\unskip\space\fi MR }
% \MRhref is called by the amsart/book/proc definition of \MR.
\providecommand{\MRhref}[2]{%
  \href{http://www.ams.org/mathscinet-getitem?mr=#1}{#2}
}
\providecommand{\href}[2]{#2}
\begin{thebibliography}{10}

\bibitem{MR1000971}
I.~Bandlow, \emph{A note on applications of the {L}\"{o}wenheim-{S}kolem
  theorem in general topology}, Z. Math. Logik Grundlag. Math. \textbf{35}
  (1989), no.~3, 283--288. \MR{1000971}

\bibitem{bandlow1991characterization}
\bysame, \emph{A characterization of {C}orson-compact spaces}, Comment. Math.
  Univ. Carolin \textbf{32} (1991), no.~3, 545--550.

\bibitem{MR1270189}
\bysame, \emph{A construction in set-theoretic topology by means of elementary
  substructures}, Z. Math. Logik Grundlag. Math. \textbf{37} (1991), no.~5,
  467--480. \MR{1270189}

\bibitem{bandlow1994function}
\bysame, \emph{On function spaces of {C}orson-compact spaces}, Comment. Math.
  Univ. Carolin \textbf{35} (1994), no.~2, 347--356.

\bibitem{Black:Operator}
B.~Blackadar, \emph{Operator algebras}, Encyclopaedia of Mathematical Sciences,
  vol. 122, Springer-Verlag, Berlin, 2006, Theory of \cstar-algebras and von
  Neumann algebras, Operator Algebras and Non-commutative Geometry, III.

\bibitem{cummings2010iterated}
J.~Cummings, \emph{Iterated forcing and elementary embeddings}, Handbook of set
  theory, {V}ol. 2 (M.~Foreman and A.~Kanamori, eds.), Springer, Dordrecht,
  2010, pp.~775--883.

\bibitem{dow1988introduction}
A.~Dow, \emph{An introduction to applications of elementary submodels to
  topology}, Topology Proc. \textbf{13} (1988), no.~1, 17--72.

\bibitem{dow1992set}
\bysame, \emph{Set theory in topology}, Recent progress in general topology
  (Prague, 1991); North-Holland, Amsterdam (1992), 167--197.

\bibitem{dow1995more}
\bysame, \emph{More set-theory for topologists}, Top. Appl. \textbf{64} (1995),
  no.~3, 243--300.

\bibitem{eisworth2006elementary}
T.~Eisworth, \emph{Elementary submodels and separable monotonically normal
  compacta}, arXiv preprint math/0608376 (2006).

\bibitem{MR2352742}
\bysame, \emph{Elementary submodels and separable monotonically normal
  compacta}, Proceedings of the 20th {S}ummer {C}onference on {T}opology and
  its {A}pplications, vol.~30, 2006, pp.~431--443. \MR{2352742}

\bibitem{engelking1989general}
R.~Engelking, \emph{General topology}, Heldermann, 1989.

\bibitem{Fa:STCstar}
I.~Farah, \emph{Combinatorial set theory and \cstar-algebras}, Springer
  Monographs in Mathematics, Springer, 2019.

\bibitem{foreman2010ideals}
M.~Foreman, \emph{Ideals and generic elementary embeddings}, Handbook of set
  theory, {V}ol. 2 (M.~Foreman and A.~Kanamori, eds.), Springer, Dordrecht,
  2010, pp.~885--1147.

\bibitem{FoMaShe:Martin}
M.~Foreman, M.~Magidor, and S.~Shelah, \emph{Martin's maximum, saturated ideals
  and nonregular ultrafilters, {I}}, Ann. of Math. (2) \textbf{127} (1988),
  1--47.

\bibitem{kunen2003compact}
K.~Kunen, \emph{Compact spaces, compact cardinals, and elementary submodels},
  Topology and its Applications \textbf{130} (2003), no.~2, 99--109.

\bibitem{Ku:Set}
\bysame, \emph{Set theory}, Studies in Logic (London), vol.~34, College
  Publications, London, 2011.

\bibitem{magidor1982reflecting}
M.~Magidor, \emph{Reflecting stationary sets}, J. Symolic Logic \textbf{47}
  (1982), no.~4, 755--771.

\bibitem{magidor2017properties}
M.~Magidor and G.~Plebanek, \emph{On properties of compacta that do not reflect
  in small continuous images}, Topo. Appli. \textbf{220} (2017), 131--139.

\bibitem{michael1977note}
E.~Michael and M.E. Rudin, \emph{A note on {E}berlein compacts}, Pacific J.
  Math. \textbf{72} (1977), no.~2, 487--495.

\bibitem{Murphy:C*}
G.J. Murphy, \emph{\cstar-algebras and operator theory}, Academic Press Inc.,
  Boston, MA, 1990.

\bibitem{Pede:Analysis}
G.K. Pedersen, \emph{Analysis now}, Graduate Texts in Mathematics, vol. 118,
  Springer-Verlag, New York, 1989.

\bibitem{pedersen2018c}
\bysame, \emph{\cstar-algebras and their automorphism groups}, second ed., Pure
  and Applied Mathematics, Academic Press, 2018.

\end{thebibliography}

\end{document}